\newcommand{\del}[1]{}
\newcommand{\add}[1]{#1}
\renewcommand{\emph}[1]{\textit{#1}}
\newtheorem{theorem}{Theorem}[section]
\newtheorem{lemma}[theorem]{Lemma}
\newtheorem{corollary}[theorem]{Corollary}
\numberwithin{equation}{section}
\newcommand{\lby}[1]{(\refstepcounter{obs}\theobs\label{#1})}
\newcommand{\lbn}{(\refstepcounter{obs}\theobs)}
\newcounter{obs}
\begin{document}

	
\title[Duality in non-abelian algebra IV]{Duality in non-abelian algebra IV.\\ Duality for groups and a universal\\ isomorphism theorem}

\author{Amartya Goswami}
\address{Department of Mathematics and Applied Mathematics, University of Limpopo, Private Bag X1106, Sovenga 0727, South Africa}

\author{Zurab Janelidze}
\address{Mathematics Division, Department of Mathematical Sciences, Stellenbosch University, Private Bag X1, Matieland 7602, South Africa}

\maketitle

\begin{abstract} 
Abelian categories provide a self-dual axiomatic context for establishing homomorphism theorems (such as the isomorphism theorems and homological diagram lemmas) for abelian groups, and more generally, modules. In this paper we describe a self-dual context which allows one to establish the same theorems in the case of non-abelian group-like structures; the question of whether such a context can be found has been left open for \add{seventy} years. We also formulate and prove in our context a \emph{universal isomorphism theorem} from which all other isomorphism theorems can be deduced. 
\end{abstract}



\section*{Introduction}

The work presented in this paper is a culminating point and gives a major application of the ``theory of forms'' that was being developed in earlier papers \cite{JW14, JW16, JW16b} from the same series. The present paper is, however, entirely self-contained and can be read independently of the previous ones.

\medskip

Since as early as the 1930's, it has been noticed that certain theorems of group theory (and in particular, the isomorphism theorems) can be proved using the structure of subgroup lattices, not referring to elements of groups.  Mac~Lane in \cite{M50,M48} notices further that a number of remarkable duality phenomena arise for groups, once basic group-theoretic notions referring to elements of groups are reformulated purely in the language of group homomorphisms and their composition. He then proposes a general axiomatic context for exploiting such phenomena, focusing however, almost from start, on abelian groups; quoting from \cite{M50}:
\begin{itemize}
	\item[]\emph{A further development giving the first and second isomorphism theorems, and so on, can be made by introducing additional carefully chosen dual axioms. This will be done below only in the more symmetrical abelian case.}
\end{itemize}
The consideration of the ``abelian case'' led to the notion of an \emph{abelian category}, refined by Buchsbaum in \cite{B55}, which became a central notion in homological algebra particularly after the work \cite{G57} of Grothendieck. Abelian categories are defined by dual axioms (a category is abelian if and only if its dual category is abelian) and they allow a convenient unified treatment of homomorphism theorems for abelian groups, and more generally, for modules over a ring. Duality is a practical tool in such a treatment, since it \add{allows} to get two dual results out of one. The problem of finding a similar self-dual treatment of non-abelian groups and other group-like structures was taken up by Wyler in \cite{W66,W71}. However, he only gave, in some sense partial solutions to this problem. The axioms in \cite{W66} are self-dual, but too restrictive to include all group-like structures (his axiom BC2 would force every epimorphism to be normal, which is true for groups, but not true, for instance, for rings), while the axioms in \cite{W71} are not self-dual (and also, are in some sense too general, so the isomorphism theorems are obtained only up to a suitable reformulation). No further progress on this problem has since been made. Instead, many investigations have been carried out around the non-dual axioms (by ``non-dual'' we mean ``non-self-dual''). Today, the notion of a \emph{semi-abelian category}, introduced in \cite{JMT02}, is considered as a ``non-commutative'' analogue of the notion of an abelian category, which includes almost all desirable categories of group-like structures (groups, non-unitary rings, Lie algebras, \add{Jordan algebras,} etc.)~and others (e.g.~loops and Heyting meet semi-lattices) in addition to abelian categories. The context of a semi-abelian category allows a unified treatment of all standard homomorphism theorems for these structures (see e.g.~\cite{BB04}). This notion has in fact deep roots in several parallel developments in abstract algebra, including the above-mentioned works of Mac~Lane and Wyler, but also the works of Barr~\cite{B71}, Bourn~\cite{B91}, and Ursini~\cite{U73} should be mentioned. Unlike the axioms of an abelian category, however, those of a semi-abelian category are highly non-dual; a semi-abelian category whose dual is also semi-abelian is already an abelian category! 

\add{In this paper we propose an entirely self-dual treatment of homomorphism theorems for a wide range of algebraic structures that includes essentially all ``group-like'' structures; this could be considered as achieving the goal originally set out by Mac~Lane seventy years ago.} \add{The  underlying axiomatic context can be seen as a self-dual adaptation of the axiomatic context of a semi-abelian category. However, our context includes not only that of a semi-abelian category as a special case, but also the context of a generalized exact category in the sense of Grandis \cite{G92} and hence adds (categories of) projective spaces and graded abelian groups to the list of examples; furthermore, our theory also includes examples not covered by these two classes of categories, such as the categories of rings with identity and Boolean algebras.} 

Our self-dual axioms \add{are} reformulations of the ``first isomorphism theorem'', the ``lattice isomorphism theorem'' and stability of normal subgroups under joins. This is in some sense not too surprising, neither from the point of view of classical exposition of isomorphism theorems such as the one found in \cite{MB99}, nor from the point of view of the theory of semi-abelian categories. In our theory, we incorporate and thoroughly exploit Mac~Lane's method of chasing subobjects (that he developed for abelian categories in \cite{M63}), which allows us to \add{discover} a single isomorphism theorem, the so-called ``universal isomorphism theorem'', for deducing all others. In particular, as an illustration, we use it to establish two of Noether's well-known isomorphism theorems, obtained originally for modules in \cite{N27}, an isomorphism theorem from \cite{MB99} and the ``butterfly lemma'' due to Zassenhaus \cite{Z34}. Other isomorphism theorems as well as the diagram lemmas of homological algebra can be established similarly, using the methods presented in this paper. In particular, our ``homomorphism induction theorem'', of which the universal isomorphism theorem is a consequence, is suitable for constructing the connecting homomorphisms in homological algebra. 

We should mention that the category-theoretic investigations in the predecessors \cite{JW14, JW16, JW16b} of this paper have played a crucial role in arriving to the list of axioms given in the present paper. Many of the basic consequences of these axioms have already been worked out in \cite{JW14, JW16, JW16b}, however, we do not refer to them in this paper, not to overload the text with citations. \add{Moreover, in this paper we avoid the discussion of the category-theoretic aspects of our work, aside of the brief remarks in the concluding section of the paper, and instead present the self-dual approach to homomorphisms theorems of groups in a primitive language, as a fragment of ordinary group theory. We take this approach for two reasons: to keep the statements of homomorphism theorems as close as possible to their original formulation for groups, and to ensure that the paper is accessible to a wide readership.}

\section{The language}

Our theory is presented in a (first-order) language derived just from groups, group homomorphisms and their composition, subgroups of groups and inclusion of subgroups, as well as direct and inverse images of subgroups along group homomorphisms. Similar to duality in projective plane geometry, in a lattice or in a category, there is inherent duality in our language under which the system of axioms that we will use is invariant. This means of course that the whole theory is invariant under duality, or in other words, it is a self-dual theory. 

The language of our theory consists of the following atomic expressions, which will have their usual meaning in group theory:
\begin{itemize}
	\item $G$ is a group;
	
	\item $f$ is a homomorphism $f\colon X\to Y$ from the group $X$ to the group $Y$ (i.e.~$X$ is the domain of $f$ and $Y$ is the codomain of $f$);
	
	\item $h$ is the composite $h=gf$ of homomorphisms $f\colon X\to Y$ and $g\colon Y\to Z$;
	
	\item $S$ is a subgroup of a group $G$;
	
	\item there is an inclusion $S\subseteq T$ between two subgroups $S$ and $T$ of $G$;
	
	\item the subgroup $B$ of the group $Y$ is the direct image $B=fA$ of the subgroup $A$ of the group $X$ under the homomorphism $f\colon X\to Y;$
	
	\item the subgroup $A$ of the group $X$ is the inverse image $A=f^{-1}B$ of the subgroup $B$ of the group $Y$ under the homomorphism $f\colon X\to Y.$
\end{itemize}
The duals of these expressions are defined as, respectively:
\begin{itemize}
	\item $G$ is a group;
	
	\item $f$ is a homomorphism $f\colon Y\to X$ from the group $Y$ to the group $X$ (i.e.~$Y$ is the domain of $f$ and $X$ is the codomain of $f$);
	
	\item $h$ is the composite $h=fg$ of homomorphisms $g\colon Z\to Y$ and $f\colon Y\to X;$
	
	\item $S$ is a subgroup of a group $G;$
	
	\item there is an inclusion $T\subseteq S$ between two subgroups $S$ and $T$ of $G$;
	
	\item the subgroup $B$ of the group $Y$ is the inverse image $B=f^{-1}A$ of the subgroup $A$ of the group $X$ under the homomorphism $f\colon Y\to X$;
	
	\item the subgroup $A$ of the group $X$ is the direct image $A=fB$ of the subgroup $B$ of the group $Y$ under the homomorphism $f\colon Y\to X.$
\end{itemize}
The dual of an expression formed from these atomic expressions is obtained by replacing each atomic expression in it by its dual. Notice that the ``double-dual'' of an expression (i.e.~dual of the dual expression) is the original expression; in other words, if an expression $\varphi$ is dual to an expression $\psi,$ then $\psi$ is dual to $\varphi.$

Of course, many group-theoretic concepts can be defined in this language. Let us give some examples:
\begin{itemize}
	\item The identity homomorphism $1_G\colon G\to G$ for a group $G,$ is the homomorphism such that $1_Gf=f$ and $g1_G=g$ for arbitrary homomorphisms $f\colon F\to G$ and $g:G\to H.$
	
	\item An isomorphism is a homomorphism $f\colon X\to Y$ such that $fg=1_Y$ and $gf=1_X$ for some homomorphism $g\colon Y\to X.$
	
	\item The trivial subgroup of a group $G$ is the subgroup $1$ of $G$ such that we have $1\subseteq S$ for any other subgroup $S$ of $G.$
	
	\item The image of a group homomorphism $f\colon X\to Y,$ written as $\mathsf{Im}f,$ is the direct image of the largest subgroup of $X$ under $f.$
	
	\item The kernel of a group homomorphism $f\colon X\to Y$, written as $\mathsf{Ker}f,$ is the inverse image of the trivial subgroup of $Y.$
	
	\item A normal subgroup of a group $G$ is its subgroup $S$ which is the kernel of some group homomorphism $f\colon G\to H.$
\end{itemize} 
The dual of a concept/property (or a construction, theorem, etc.) is one given by taking the dual of the defining/determining expression. In some cases this results in the same/equivalent concept/property, or the same construction, in which case we say that the concept/property/construction is \emph{self-dual}. For example, such is the construction of the identity homomorphism $1_G$ from a group $G.$ The concept of an isomorphism is also self-dual. Dual of the concept of a trivial subgroup of a group $G$ is the concept of the largest subgroup of $G,$ which in group theory is given by the same $G.$ The concepts of image and kernel of a homomorphism are dual to each other. Dual of the concept of a normal subgroup will be referred to as a \emph{conormal subgroup}. A conormal subgroup $S$ of a group $G$ is a subgroup of $G$ which appears as the image of some group homomorphism $f\colon F\to G.$ In group theory all subgroups are of course conormal, but in our axiomatic development we will not require this since its dual requirement for all subgroups to be normal fails for ordinary groups.

\add{In this paper we do not attempt to describe what the most convenient logical formalism for our theory should be, although this line of research would certainly be an interesting one, especially in the direction of developing an efficient proof-assisting software for our theory. As it stands, we can assume that we are working within (in fact, intuitionistic) first-order logic with predicate symbols necessary to express the statements in the first two bullet lists above. It is worth remarking, however, that among logical operations we only use conjunction, implication, the ``truth constant'', and existential and universal quantifiers --- we will never use either disjunction or the ``false constant''.}

\section{The axioms and their immediate consequences}
We now state the axioms for our theory, and discuss their basic consequences. Each of these axioms is self-dual in the sense explained in the previous section.
\begin{description}
	\item[Axiom 1.1] Composition of homomorphisms is associative, i.e.~we have
	$$(fg)h=f(gh)$$ whenever the three homomorphisms $f,g,h$ are arranged as
	$$\xymatrix{V\ar[r]^-{h} & W\ar[r]^-{g} & X\ar[r]^-{f} & Y }$$
	and each group has an identity homomorphism.
	
	\item[Axiom 1.2] For each group $G,$ subgroup inclusion is reflexive, transitive and antisymmetric --- for any three subgroups $A,$ $B$ and $C$ of a group $G,$ we have: $A\subseteq A$; if $A\subseteq B$ and $B\subseteq C$ then $A\subseteq C$; and, if $A\subseteq B$ and $B\subseteq A,$ then $A=B.$
	
	\item[Axiom 1.3] For each homomorphism $f\colon X\to Y$ and for any subgroup $A$ of $X$ and $B$ of $Y$ we have 
	$$f(A)\subseteq B\quad\Leftrightarrow\quad A\subseteq f^{-1}(B).$$
	
	\item[Axiom 1.4] The direct and inverse image maps for the identity homomorphisms are identity maps, and for any two homomorphisms $$\xymatrix{W\ar[r]^-{g} & X\ar[r]^-{f} & Y }$$
	and subgroups $A$ of $W$ and $B$ of $Y,$ we have $$f(gA)=(fg)A\text{ and }g^{-1}(f^{-1}B)=(fg)^{-1}B.$$
\end{description}
These axioms can be conveniently put together into one axiom using basic concepts of category theory as follows:
\begin{description}
	\item[Axiom 1] Groups and group homomorphisms, under composition of homomorphisms, form a category (called the \emph{category of groups}). Furthermore, for each group $G,$ the subgroups of $G$ together with subgroup inclusions form a poset $\mathsf{Sub}G$; for each homomorphism $f\colon X\to Y$ the direct and inverse image maps form a monotone Galois connection 
	$$\xymatrix{ \mathsf{Sub}X\ar@<2pt>[r] & \mathsf{Sub}Y\ar@<2pt>[l]}$$ and this defines a functor from the category of groups to the category of posets and Galois connections.  
\end{description}

As usual for associative operations, associativity of composition allows us to write composites without the use of brackets. Axioms 1.2 and 1.3 have several useful consequences which come out from the general theory of Galois connections. 

Firstly, 
\begin{itemize}
	\item[\lbn] both direct and inverse image maps are monotone, i.e.~they preserve inclusion of subgroups. 
\end{itemize}
Thanks to duality, we only need to show that one of them does. This will be done by first showing that 
\begin{itemize}
	\item[\lby{obsS}] $A\subseteq f^{-1}fA$ whenever $fA$ is defined. 
\end{itemize}
Indeed, by Axiom 1.3 this is equivalent to $fA\subseteq fA$ which we do have by Axiom 1.2 (and dually, we always have $ff^{-1}B\subseteq B$). Now, if $A_1\subseteq A_2$ then using what we just proved for $A_2$ and Axiom 1.2, we conclude $A_1\subseteq f^{-1}fA_2.$ By Axiom 1.3, this gives $fA_1\subseteq fA_2,$ which proves that $f$ is monotone. 

From the above we also easily get:
\begin{itemize}
	\item[\lbn] $ff^{-1}fA=fA$ and $f^{-1}ff^{-1}B=f^{-1}B$ whenever $fA$ and $f^{-1}B$ are defined.
\end{itemize}

Furthermore, 
\begin{itemize}
	\item[\lbn] the direct image map will always preserve joins of subgroups and the inverse image map will always preserve meets of subgroups. 
\end{itemize}
Indeed, given a set $\mathcal{S}$ of subgroups of a group $G,$ its join is defined as the smallest subgroup $T$ of $G$ such that $S\subseteq T$ for all $S\in\mathcal{S}.$ Let us write $T=\bigvee\mathcal{S}.$ We want to show that when the join of $\mathcal{S}$ exists, the join of the set $f\mathcal{S}=\{fS\mid S\in\mathcal{S}\}$ of subgroups of the codomain of $f$ will exist and will be equal to $f\bigvee\mathcal{S}.$ For this, first observe that by monotonicity of the direct image map, we have $S'\subseteq f\bigvee\mathcal{S}$ for all $S'\in f\mathcal{S}.$ Suppose $S'\subseteq T$ for all $S'\in f\mathcal{S}.$ Then for each $S\in\mathcal{S}$ we have $S\subseteq f^{-1}T$ by Axiom 1.3. This gives $\bigvee\mathcal{S}\subseteq f^{-1}T,$ and using again Axiom 1.3 we obtain $f\bigvee\mathcal{S}\subseteq T.$ This proves that $f\bigvee\mathcal{S}$ is the join of $f\mathcal{S}.$ In the special case when $\mathcal{S}$ is the empty set, we obtain that 
\begin{itemize}
	\item[\lbn] the direct image of the trivial subgroup (when it exists) under a homomorphism is a trivial subgroup. 
\end{itemize}
Meets of sets of subgroups is defined dually, and by duality, we get at once that the inverse image maps preserve them. In particular, 
\begin{itemize}
	\item[\lbn] the inverse image of the largest subgroup (when it exists) is the largest subgroup.
\end{itemize} 
Axiom 1.4 has the following useful consequence:
\begin{itemize}
	\item[\lby{obsB}] the direct and the inverse image maps of an isomorphism $f$ are the same as the inverse and the direct image maps, respectively, of its inverse $f^{-1}.$
\end{itemize}

The next axiom will require existence of finite meets and joins, including the empty ones, which amounts to saying that the posets of subgroups are bounded lattices in the sense of order theory. In addition, we will require a special property of how the composites of direct and inverse images maps for a single homomorphism can be computed using the meets and joins. 

For any group homomorphism $f\colon X\to Y$ we have
$$ff^{-1}B=B\wedge \mathsf{Im}f$$
for any subgroup $B$ of $Y,$ where ``$\wedge$'' is the intersection of subgroups, which is the same as the meet of subgroups in the lattice of subgroups. This fact comes from a similar fact which is true for any map $f$ between sets and any subset $B$ of the codomain of $f.$ The dual of this fact is no longer true for sets, but is still true for groups:
$$f^{-1}fA=A\vee \mathsf{Ker}f$$
for any group homomorphism $f\colon X\to Y$ and subgroup $A$ of $X.$ Note that here ``$\vee$'' denotes the join of subgroups in the lattice of subgroups. This brings us to the second axiom:

\begin{description}
	\item[Axiom 2] The poset of subgroups of each group is a bounded lattice (each group has largest and smallest subgroups, and join and meet of two subgroups of a group always exist). Furthermore, for any group homomorphism $f\colon X\to Y$ and subgroups $A$ of $X$ and $B$ of $Y$ we have $ff^{-1}B=B\wedge \mathsf{Im}f$ and $f^{-1}fA=A\vee \mathsf{Ker}f.$
\end{description} 

The second part of this axiom has an alternative formulation (which gives the ``lattice isomorphism theorem'' in group theory): 
\begin{itemize}
	\item[\lby{obsY}] $A$ satisfies $A=f^{-1}fA$ if and only if $\mathsf{Ker}f\subseteq A$ and $B$ satisfies $B=ff^{-1}B$ if and only if $B\subseteq \mathsf{Im}f.$
\end{itemize}
It is not difficult to notice how this reformulation can be deduced from Axiom~2. Conversely, if this reformulation holds, then $B\wedge \mathsf{Im}f=ff^{-1}(B\wedge \mathsf{Im}f)=f(f^{-1}B\wedge f^{-1}\mathsf{Im}f)=ff^{-1}B,$ where the last equality follows from the fact that $f^{-1}\mathsf{Im}f$ must be the largest subgroup of $X.$

Given a subgroup $S$ of a group $G,$ we may consider $S$ as a group in its own right. The subgroup inclusion homomorphism $\iota_S\colon S\to G$ has the following universal property: $\mathsf{Im}\iota_S\subseteq S$ and for any other group homomorphism $f\colon F\to G$ such that $\mathsf{Im}f\subseteq S,$ there exists a unique homomorphism $u\colon F\to S$ which makes the triangle
$$\xymatrix{ S\ar[r]^-{\iota_S} & G \\ F\ar[ur]_-{f}\ar@{..>}[u]^-{u} & }$$
commute. This property determines the subgroup inclusion homomorphism uniquely up to an isomorphism in the sense that homomorphisms which have the same property are precisely all composites $\iota_S u$ where $u$ is an isomorphism. Such composites are of course precisely the injective homomorphisms. In our axiomatic theory we call these homomorphisms \emph{embeddings}. The dual notion is that of a \emph{projection}. In group theory these are the same as surjective group homomorphisms. Given a \add{normal} subgroup $S$ of a group $G,$ the homomorphisms having the dual universal property to the one above turn out to be precisely the surjective group homomorphisms whose kernel is $S.$ \add{This leads us to the following axiom:}

\begin{description}
	\item[Axiom 3] Each \add{conormal} subgroup $S$ of a group $G$ admits an embedding \add{$\iota_S\colon S/1\to G$ such that $\mathsf{Im}\iota_S\subseteq S$ and for arbitrary group homomorphism $f\colon U\to G$ such that $\mathsf{Im}f\subseteq S,$ we have $f=\iota_Su$ for a unique homomorphism $u\colon U\to S/1$. Dually, each normal subgroup $S$ of a group $G$ admits a projection $\pi_S\colon G\to G/S$ such that $S\subseteq\mathsf{Ker}\pi_S$ and for an arbitrary group homomorphism $g\colon G\to V$ such that $S\subseteq\mathsf{Ker}g,$ we have $g=v\pi_S$ for a unique homomorphism $v\colon G/S\to V.$}  
\end{description} 

Given a subgroup $S$ of a group $G,$ a homomorphism $m\colon M\to G$ is said be an \emph{embedding} associated to $S$ when $\mathsf{Im}m\subseteq S$ and for any homomorphism $m'\colon M'\to G$ such that $\mathsf{Im}m'\subseteq S,$ there exists a unique homomorphism $u\colon M'\to M$ such that $m'=mu.$ It is not difficult to verify that under the previous three axioms embeddings will have the following properties:
\begin{itemize}
	\item[\lby{obsE'}] Any embedding is a monomorphism, i.e.~if $mu=mu'$ then $u=u',$ for any embedding $m\colon M\to G$ and any pair of parallel homomorphisms $u$ and $u'$ with codomain $M.$
	
	\item[\lby{obsX}] For any two embeddings $m$ and $m'$ corresponding to a subgroup $S,$ we have $m'=mi$ for a suitable (unique) isomorphism $i.$ Moreover, if $m\colon M\to G$ is an embedding for $S$ then so is $mi,$ for any isomorphism $i$ with codomain $M.$ In particular, embeddings associated to a \add{conormal} subgroup $S$ of a group $G$ are precisely the composites $\iota_Si$ where $\iota_S$ is from Axiom 3 and $i$ is any isomorphism with codomain \add{$S/1.$} 
	
	\item[\lbn] Any embedding is an embedding associated to its image. Moreover, the image of an embedding associated to a \add{conormal} subgroup $S$ is the subgroup $S$\add{.}
	
	\item[\lbn] Any isomorphism is an embedding and an embedding is an isomorphism if and only if it is an embedding associated to the largest subgroup (of its codomain).     
\end{itemize} 
Dual properties hold for \add{the dual notion of a \emph{projection} associated with a subgroup.} \add{These properties imply that the identity homomorphism $1_G$ is an embedding associated with the largest subgroup of $G$, and at the same time, the projection associated with the smallest subgroup of $G$. We then adopt the following convention:} 
\begin{itemize}
	\item[\add{\lby{obsAF}}] \add{when $S$ is the largest subgroup of $G,$ the homomorphism $\iota_S$ is chosen to be specifically the identity homomorphism $1_G$ and not just any embedding associated with $S$; dually, $\pi_S=1_G$ when $S$ is the trivial subgroup of $G$.}
	\end{itemize}

A subgroup $B$ of a group $G$ is said to be \emph{normal to} a subgroup $A$ of $G$ when $B\subseteq A,$ and moreover, $A$ is conormal and $\iota_A^{-1}B$ is normal. As in group theory, we write $B\triangleleft A$ when this relation holds. This relation is preserved under inverse images:
\begin{itemize}
	\item[\lby{obsZ}] For any group homomorphism $f\colon X\to Y$ and subgroups $A$ and $B$ of $Y,$ if $B\vartriangleleft A$ and $f^{-1}A$ is conormal, then $f^{-1}B\vartriangleleft f^{-1}A.$   
\end{itemize}
This follows easily from the fact that normal subgroups are stable under inverse images along arbitrary homomorphisms, which in turn is a direct consequence of the definition of a normal subgroup as an inverse image of a trivial subgroup. \add{Dually, when $B$ is normal, $B\subseteq A$ and $\pi_BA$ is conormal, we say that $A$ is \emph{conormal to} $B.$ In group theory this simply means that $B$ is a normal subgroup of $G$ and $A$ is any subgroup of $G$ larger than $B.$} The dual result to normal subgroups being stable under inverse images is: 
\begin{itemize}
	\item[\lby{obsV}] conormal subgroups are stable under direct images along arbitrary homomorphisms. 
\end{itemize}

\begin{description}
	\item[Axiom 4] Any group homomorphism $f\colon X\to Y$ factorizes as $$f=\iota_{\mathsf{Im}f}h\pi_{\mathsf{Ker}f}$$ where $h$ is an isomorphism.
\end{description} 

The isomorphism $h$ in this axiom is precisely the one in the ``first isomorphism theorem'' in group theory, which states that the image of a group homomorphism is isomorphic to the quotient of the domain by the kernel:  
$$
\xymatrix{ X\ar[r]^{f}\ar[d]_{\pi_{\mathsf{Ker}f}} & Y\\
	X/\mathsf{Ker}f \ar[r]_{h} & \mathsf{Im}f/1\ar[u]_{\iota_{\mathsf{Im}f}}
}
$$
One crucial consequence of this axiom is that any 
\begin{itemize}
	\item[\lby{obsE}] embedding has trivial kernel and dually, the image of a projection is the largest subgroup of its codomain. 
\end{itemize}
Indeed, consider an embedding $m\colon M\to X.$ By the above axiom, it factorizes as $$m=\iota_{\mathsf{Im}m}h\pi_{\mathsf{Ker}m}$$ where $h$ is an isomorphism. Since $m$ is an embedding, it must be an embedding associated to its image. This forces the composite $h\pi_{\mathsf{Ker}m}$ to be an isomorphism. But then, since $h$ is also an isomorphism, we get that $\pi_{\mathsf{Ker}m}$ is an isomorphism. This implies that $\mathsf{Ker}m$ is the trivial subgroup of $M,$ as desired.

We have now seen that thanks to Axiom 4, embeddings have trivial kernels. In fact, Axiom 4 allows us to conclude also the converse: 
\begin{itemize}
	\item[\lby{obsAC}] if a homomorphism has a trivial kernel, then it must be an embedding. 
\end{itemize}
Indeed, for such homomorphism $f,$ in its factorization $f=\iota_{\mathsf{Im}f}h\pi_{\mathsf{Ker}f}$ the homomorphism $\pi_{\mathsf{Ker}f}$ is an isomorphism, and so $f$ is an embedding composed with an isomorphism, which forces $f$ itself to be an embedding. Dually, 
\begin{itemize}
	\item[\lby{obsD}] projections are precisely those homomorphisms whose images are the largest subgroups in the codomains.
\end{itemize}
This has three straightforward but significant consequences: 
\begin{itemize}
	\item[\lby{obsW}] Composite of two embeddings/projections is an embedding/projection. 
	
	\item[\lby{obsT}] If a composite $f=em$ is an embedding, then so must be $m,$ and dually, if it is a projection, then so must be $e.$
	
	\item[\lby{obsR}] A homomorphism is both an embedding and a projection if and only if it is an isomorphism.
\end{itemize}
Thanks to Axiom 2 we are also able to conclude the following useful facts:
\begin{itemize}
	\item[\lby{obsF}] A homomorphism is an embedding if and only if the corresponding direct image map is injective and if and only if the corresponding inverse image map is surjective. Dually, a homomorphism is a projection if and only if the corresponding inverse image map is injective and if and only if the corresponding direct image map is surjective.
	
	\item[\lby{obsAB}] A homomorphism is an embedding if and only if the corresponding direct image map is a right inverse of the corresponding inverse image map. Dually, a homomorphism is a projection if and only if the corresponding inverse image map is a right inverse of corresponding direct image map.
\end{itemize}
This has the following useful consequence:
\begin{itemize}
	\item[\lby{obsU}] For an embedding $m$ corresponding to a conormal subgroup $S,$ and for any two subgroups $A$ and $B$ such that $A\vee B\subseteq S,$ we have: $m^{-1}(A\vee B)=m^{-1}A\vee m^{-1}B.$
\end{itemize}

\add{Let us} write $G$ for the largest subgroup of a group $G$ and $1$ for its smallest subgroup. \add{Then, according to convention (\ref{obsAF}), for a conormal subgroup $A$ of $G$,
$$(A/1)/1=A/1,$$
and for a normal subgroup $B$ of $G$,
$$(G/1)/B=G/B.$$
This means that these two types of quotients are particular cases of the following construction of a \emph{subquotient} (for the first one, we would have to use (\ref{obsE})): when a subgroup $B$ of $G$ is normal to a conormal subgroup $A,$ we write $A/B$ to denote
$$A/B=(A/1)/\iota_A^{-1}B.$$
The dual construction to subquotient will be written as $B\backslash A$ (it is defined when $A$ is conormal to $B$). 
In group theory this simply means that $B$ is a normal subgroup of $G$ and $A$ is any subgroup of $G$ larger than $B,$ and we have $B\backslash A=A/B.$ We call $A/B$ the \emph{quotient} of $A$ by $B,$ and $B\backslash A$ the \emph{coquotient} of $A$ by $B.$ As we just remarked, in group theory, a coquotient is a particular instance of a quotient.}

\begin{description}
	\item[Axiom 5] Join of any two normal subgroups of a group is normal and meet of any two conormal subgroups is conormal.
\end{description} 
This axiom has the following reformulation: 
\begin{itemize}
	\item[\lby{obsA}] normal subgroups are stable under direct images along projections and conormal subgroups are stable under inverse images along embeddings. 
\end{itemize}
Let us now explain why is Axiom 5 equivalent to (\ref{obsA}). Consider two normal subgroups $A$ and $B$ of a group $G.$ Then $A\vee B=\pi_B^{-1}\pi_BA$ by Axiom 2. (\ref{obsA}) will give that $\pi_B A$ is normal and since normal subgroups are stable under inverse images, this will imply that $A\vee B$ is normal. So, by duality, (\ref{obsA}) implies Axiom~5. The argument for the converse implication is slightly less straightforward. Let $N$ be a normal subgroup of a group $G$ and let $p:G\to H$ be a projection. If the join $N\vee\mathsf{Ker}p$ is normal, then it is the kernel of the associated projection $p'.$ Since evidently $\mathsf{Ker}p\subseteq N\vee\mathsf{Ker}p,$ it follows by the universal property of the projection $p$ that $p'=vp$ for some (unique) homomorphism $v.$ We then have
$$pN=pp^{-1}pN=p(N\vee\mathsf{Ker}p)=p\mathsf{Ker}p'=pp^{-1}\mathsf{Ker}v=\mathsf{Ker}v$$
thus showing that $pN$ is normal. Thus, by duality, Axiom 5 implies (\ref{obsA}).

An analogue of (\ref{obsA}) for the normality relation can be easily obtained:
\begin{itemize}
	\item[\lby{obsAA}] For any projection $p\colon X\to Y$ and subgroups $A$ and $B$ of $X,$ if $B\vartriangleleft A$ then $pB\vartriangleleft pA.$ 
\end{itemize} 

\section{Homomorphism induction and a universal isomorphism theorem}\label{SecA}

Both, the isomorphisms arising in the isomorphisms theorems and the so-called connecting homomorphisms arising in homological diagram lemmas, can be obtained by composing relations created from zigzags of homomorphisms. In this section we develop a technique for detecting and producing these isomorphisms/connecting homomorphisms from zigzags of homomorphisms in our axiomatic study. The following fact, and its dual, provide basis for this technique:

\begin{lemma}\label{lemA}
	Any two normal subgroups $N$ and $R$ of a group $G$ gives rise to a commutative diagram
	$$\lby{eqD}\quad\vcenter{\xymatrix@=20pt{ & Z &  \\ X\ar[ur]^-{x} & & Y\ar[ul]_-{y} \\ & G\ar[ul]^-{n}\ar[ur]_-{r}\ar[uu]|-{p} & }}$$
	where $n,$ $p$ and $r$ are projections associated to $N,$ $N\vee R$ and $R,$ respectively. Moreover, in this diagram $y^{-1}xS=rn^{-1}S$ for any subgroup $S$ of $X.$    
\end{lemma}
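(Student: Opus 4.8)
The plan is to build the diagram from the universal property of projections, and then prove the displayed identity by a short direct-image/inverse-image computation that converts everything into joins via Axiom~2, leaving a single absorption step as the real work.

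For the construction, recall that since $n\colon G\to X$ and $p\colon G\to Z$ are projections associated to $N$ and $N\vee R$, we have $\mathsf{Ker}n=N$ and $\mathsf{Ker}p=N\vee R$. Because $N\subseteq N\vee R=\mathsf{Ker}p$, the universal property of the projection $n$ supplies a unique homomorphism $x\colon X\to Z$ with $p=xn$; symmetrically, from $R\subseteq N\vee R=\mathsf{Ker}p$ we obtain a unique $y\colon Y\to Z$ with $p=yr$. This yields the commutative diagram (\ref{eqD}), and by (\ref{obsT}) both $x$ and $y$ are in fact projections, being left factors of the projection $p$ (though only Axiom~2, which holds for arbitrary homomorphisms, is needed below).

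For the identity, fix a subgroup $S$ of $X$. Since $n$ is a projection we have $nn^{-1}S=S$, so writing $U=n^{-1}S$ and using associativity of direct and inverse images (Axiom~1.4) together with $xn=p=yr$,
$$y^{-1}xS=y^{-1}x(nU)=y^{-1}(pU)=y^{-1}\bigl(y(rU)\bigr).$$
Applying the second identity of Axiom~2 to $y$ then gives
$$y^{-1}xS=rU\vee\mathsf{Ker}y=r(n^{-1}S)\vee\mathsf{Ker}y.$$
Thus the desired equality $y^{-1}xS=rn^{-1}S$ reduces to showing that the extra term is absorbed, i.e.\ that $\mathsf{Ker}y\subseteq r(n^{-1}S)$.

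This absorption is the crux of the argument, and it is where I would spend the care. I would compute $\mathsf{Ker}y$ explicitly: from $p=yr$ and Axiom~1.4 we get $\mathsf{Ker}p=r^{-1}\mathsf{Ker}y$, and applying $r$ (a projection, so $rr^{-1}=\mathrm{id}$) gives $\mathsf{Ker}y=r\,\mathsf{Ker}p=r(N\vee R)$. Since the direct image map preserves joins and $rR=r\,\mathsf{Ker}r=1$, this collapses to $\mathsf{Ker}y=rN\vee rR=rN$. Finally $N=\mathsf{Ker}n\subseteq n^{-1}S$, so monotonicity of the direct image yields $\mathsf{Ker}y=rN\subseteq r(n^{-1}S)$, and the join in the previous display collapses to $r(n^{-1}S)$, as required. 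The only genuine subtlety is this bookkeeping with $\mathsf{Ker}y$: once one sees that $\mathsf{Ker}y=rN$ and that $N$ lies inside every inverse image $n^{-1}S$, the result falls out immediately.
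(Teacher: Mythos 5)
Your proof is correct and takes essentially the same route as the paper's: both insert $nn^{-1}$ (using that $n$ is a projection), rewrite via $xn=p=yr$, and then reduce everything to the join formula of Axiom~2 together with the two absorptions $rR=1$ and $N=\mathsf{Ker}\,n\subseteq n^{-1}S$. The only cosmetic difference is that the paper applies Axiom~2 to $p$ (after inserting $rr^{-1}$ to form $rp^{-1}p$), whereas you apply it to $y$ and compute $\mathsf{Ker}\,y=rN$ as a separate step --- the same ingredients in a slightly different order, with your explicit construction of $x$ and $y$ from the universal property being a welcome addition that the paper leaves implicit.
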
  

\begin{proof}
	From the fact that $p$ is a projection and $p=yr$ it follows that $y$ is a projection as well. We then have:
	\begin{align*}
	y^{-1}xS & = y^{-1}xnn^{-1}S \\
	& = y^{-1}yrn^{-1}S\\
	& = rr^{-1}y^{-1}yrn^{-1}S\\
	& = rp^{-1}pn^{-1}S\\
	& = r(n^{-1}S\vee\mathsf{Ker}p)\\ 
	& = r(n^{-1}S\vee N\vee R)\\ 
	& = r(n^{-1}S\vee R)\\ 
	& = (rn^{-1}S)\vee rR\\ 
	& = rn^{-1}S
	\end{align*}
\end{proof}        

Consider a zigzag
$$\lby{diaB}\quad\xymatrix@C=2pc{
	X_0\ar@{-}[r]^-{f_1} & X_1 & X_2\ar@{-}[l]_-{f_2} \ar@{-}[r]^-{f_3} & X_3 & X_4\ar@{-}[l]_-{f_4} \ar@{.}[r]& X_{n-1} & X_n\ar@{-}[l]_-{f_n}
}$$ of homomorphisms, where we have deliberately left out arrowheads since for each homomorphism an arrowhead is allowed to appear either on the left or on the right. The objects appearing in the zigzag will be called \emph{nodes} of the zigzag. We say that a subgroup $T$ of $X_n$ is obtained from a subgroup $S$ of $X_0$ by \textit{chasing} it (forward) along the zigzag when
$$T=f_n^\ast\cdots f_4^\ast f_3^\ast f_2^\ast f_1 ^\ast S$$
where $f_i^\ast=f_i$ when the arrowhead at the $i$-th place appears on the right and $f_i^\ast=f_i^{-1}$ when the arrowhead at the $i$-th place appears on the left. When in a zigzag (\ref{diaB}) all arrows pointing to the left are isomorphisms, we say that the zigzag is \emph{collapsible} and the composite 
$$f_n^\ast \cdots f_4^\ast f_3^\ast f_2^\ast f_1^\ast$$
where each $f_i^\ast$ is as before (this time, $f_i^{-1}$ representing the inverse of the isomorphism $f_i$) is called the \emph{induced homomorphism} of the collapsible zigzag.

As a direct consequence of (\ref{obsB}), we have:
\begin{itemize}
	\item[\lby{obsRR}] For a collapsible zigzag, the direct image and inverse image maps corresponding to the induced homomorphism are given by chasing subgroups forward and backward, respectively, along the zigzag.
\end{itemize}
Consider a diagram,
$$\lby{diaA}\quad\vcenter{\xymatrix@C=2pc{ X_0\ar@{-}[r]^-{f_1}\ar[d]_{g_0} & X_1\ar[d]_{g_1} & X_2\ar[d]_{g_2}\ar@{-}[l]_-{f_2} \ar@{-}[r]^-{f_3} & X_3\ar[d]_{g_3} & X_4\ar[d]_{g_4}\ar@{-}[l]_-{f_4} \ar@{.}[r]& X_{n-1}\ar[d]_{g_{n-1}} & X_n\ar[d]^{g_n}\ar@{-}[l]_-{f_n} \\
		X'_0\ar@{-}[r]_-{f'_1} & X'_1 & X'_2\ar@{-}[l]^-{f'_2} \ar@{-}[r]_-{f'_3} & X'_3 & X'_4\ar@{-}[l]^-{f'_4} \ar@{.}[r]& X'_{n-1} & X'_n\ar@{-}[l]^-{f'_n}
}}$$
with the top and bottom rows being zigzags of homomorphisms. A square
$$\lby{diaC}\quad\vcenter{\xymatrix{ X_{i-1}\ar@{-}[r]^-{f_i}\ar[d]_-{g_{i-1}} & X_{i}\ar[d]^-{g_i} \\ X'_{i-1}\ar@{-}[r]_-{f'_i} & X'_{i} }}$$ in this diagram is said to be commutative when $g_if_i=f'_ig_{i-1}$ if both horizontal arrows are right-pointing, $g_{i-1}f_i=f'_ig_{i}$ if they are both left-pointing, $f'_ig_if_i=g_{i-1}$ if the top arrow is right-pointing and the bottom arrow is left-pointing and $f'_ig_{i-1}f_i=g_i$ if the top arrow is left-pointing and the bottom arrow is right-pointing. It is not difficult to see that 
\begin{itemize}
	\item[\lby{obsL}]
	when in a commutative diagram (\ref{diaC}) either the top or the bottom arrows are isomorphisms, replacing any of such arrows with their inverses (pointing in the opposite direction), still results in a commutative diagram.
\end{itemize} 
From this we can conclude the following:
\begin{itemize}
	\item[\lby{obsJ}]
	In a diagram (\ref{diaA}) where all squares are commutative, if both the top and the bottom rows are collapsible, then $g_nf=f'g_0,$ where $f$ is a homomorphism induced by the top zigzag and $f'$ is a homomorphism induced by the bottom zigzag. Moreover, if all homomorphisms $g_i$ are isomorphisms, then the top row is collapsible if and only if the bottom row is collapsible. 
\end{itemize}  
In particular, this implies that 
\begin{itemize}
	\item[\lby{obsI}]
	when in a diagram (\ref{diaA}) where all squares commute and the top and bottom rows are collapsible zigzags, if $g_0$ and $g_n$ are identity homomorphisms then both collapsible zigzags induce the same homomorphism. 
\end{itemize}  

A \emph{subquotient} is a zigzag whose right-pointing arrows are projections and left-pointing arrows are embeddings. We will use the term \emph{cosubquotient} for the dual notion. As we will soon see, there is a canonical way of constructing from a zigzag another zigzag which is a subquotient followed by a cosubquotient. Notice that a zigzag is a subquotient if and only if its \emph{opposite zigzag} (the original zigzag reflected horizontally) is a cosubquotient.

\begin{lemma}\label{lemC}
	The opposite zigzag of a subquotient is collapsible if and only if
	\begin{itemize}
		\item[\lby{obsH}] chasing the trivial subgroup of the final node backward along the zigzag results in the trivial subgroup of the initial node.
	\end{itemize}
\end{lemma}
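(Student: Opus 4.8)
The plan is to unwind both notions in the statement into explicit operations on subgroups and then reduce the equivalence to two elementary facts coming from Axiom~2. First I would record what the two sides mean. Reflecting the subquotient horizontally turns each right-pointing arrow (a projection) into a left-pointing one and each left-pointing arrow (an embedding) into a right-pointing one; hence the left-pointing arrows of the opposite zigzag are exactly the projections of the original subquotient, and by the definition of collapsibility the opposite zigzag is collapsible if and only if every one of these projections is an isomorphism. On the other side, chasing the trivial subgroup backward amounts to the following computation: set $S_n=1$ in $X_n$ and, running through the edges from right to left, put $S_{i-1}=f_i^{-1}S_i$ when $f_i$ is a (right-pointing) projection and $S_{i-1}=f_iS_i$ when $f_i$ is a (left-pointing) embedding; then condition (\ref{obsH}) is precisely the assertion $S_0=1$.

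For the forward implication I would argue that if the opposite zigzag is collapsible then, by (\ref{obsRR}) applied to it, backward chasing along the subquotient coincides with the direct image map of the homomorphism induced by the opposite zigzag; since the direct image of the trivial subgroup is trivial, we get $S_0=1$. Equivalently, one can argue directly: when every projection is an isomorphism, each backward step sends a trivial subgroup to a trivial one, because an isomorphism has trivial kernel so $f_i^{-1}1=\mathsf{Ker}f_i=1$, and the direct image of the trivial subgroup along any homomorphism is trivial; downward induction then yields $S_0=1$.

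The reverse implication is where the real work lies, and it rests on two preservation facts, both consequences of Axiom~2. For a projection $p$ we have $pp^{-1}T=T\wedge\mathsf{Im}p=T$, so if $p^{-1}T=1$ then $T=p(p^{-1}T)=p1=1$; thus inverse image along a projection reflects triviality. For an embedding $m$ we have $m^{-1}mS=S\vee\mathsf{Ker}m=S$ with $\mathsf{Ker}m=1$ by (\ref{obsE}), so $mS=1$ gives $S=m^{-1}mS=m^{-1}1=1$; thus direct image along an embedding reflects triviality as well. With these in hand I would argue contrapositively: if some projection $f_j$ fails to be an isomorphism then $\mathsf{Ker}f_j\neq1$, whence $S_{j-1}=f_j^{-1}S_j\supseteq f_j^{-1}1=\mathsf{Ker}f_j\neq1$; and the two preservation facts show that every subsequent backward step keeps the subgroup non-trivial, so $S_0\neq1$ and (\ref{obsH}) fails.

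The main obstacle is exactly this reverse direction: one must guarantee that no later move in the chase can collapse an already non-trivial subgroup back to the trivial one. This is not automatic for arbitrary homomorphisms; the point is that along a subquotient the only two moves are inverse image along a projection and direct image along an embedding, and both reflect triviality by Axiom~2 together with surjectivity of projections and triviality of kernels of embeddings from (\ref{obsE}). Once these two facts are isolated, the downward induction carrying non-triviality from $S_{j-1}$ to $S_0$ is routine.
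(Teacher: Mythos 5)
Your proposal is correct, and its mathematical content coincides with the paper's own proof: both reduce collapsibility of the opposite zigzag to triviality of the kernels of the right-pointing arrows (a projection with trivial kernel is an embedding by (\ref{obsAC}), hence an isomorphism by (\ref{obsR})), and both rest on the same two step-wise facts, $pp^{-1}T=T$ for a projection $p$ and $m^{-1}mS=S$ for an embedding $m$, which you derive directly from Axiom~2 together with (\ref{obsE}) and its dual, and which the paper invokes as (\ref{obsAB}) and (\ref{obsF}). The one genuine difference is logical packaging of the hard direction: you prove that (\ref{obsH}) implies collapsibility by contraposition, locating a projection with non-trivial kernel and propagating non-triviality of the chased subgroup down to $S_0$, whereas the paper argues positively, deducing from $S_0=1$ that $S_i=1$ at every node, hence that every one-step backward chase of the trivial subgroup is trivial (its intermediate condition (\ref{obsG})), which for the right-pointing arrows says exactly that their kernels are trivial. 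Each of your ``reflection of triviality'' steps is literally the contrapositive of one of the paper's propagation steps, so the two proofs are interconvertible by running the implications in the opposite direction. The paper's positive form does buy something, though: the authors state that their theory is meant to live in a negation-free fragment of intuitionistic logic (no ``false constant''), and your argument by contraposition uses negation essentially; since your step implications are already isolated in triviality-reflecting form, rewriting your proof to conform is immediate --- start from $S_0=1$ and push triviality upward instead of pushing non-triviality downward.
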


\begin{proof}
	By definition, a zigzag is collapsible if and only if all left-pointing arrows in it are isomorphisms, so being collapsible in the opposite direction is having right-pointing arrows isomorphisms. In a zigzag where these arrows are projections, this is the same as to require (thanks to the dual of (\ref{obsD}) and (\ref{obsR})) that the right-pointing arrows have trivial kernels. If (\ref{obsH}) holds, then chasing the trivial subgroup backward along the zigzag, up to the node just before the first one, will still result in a trivial subgroup. Indeed, if the arrow between the first two nodes is left-pointing then it is an embedding, and so the result follows from (\ref{obsF}). If it is right-pointing, then it is a projection and then the result follows from (\ref{obsAB}). Iterating the same argument we lead to the conclusion that chasing the trivial subgroup of the final node backward along the zigzag results in the trivial subgroup at each node, and consequently,
	\begin{itemize}
		\item[\lby{obsG}] chasing the trivial subgroup from any node to the left to the adjacent node results in the trivial subgroup of that node.
	\end{itemize}
	So (\ref{obsH}) implies (\ref{obsG}). Moreover, these properties are in fact equivalent to each other (the converse implication is straightforward). At the same time, in a subquotient, (\ref{obsG}) is equivalent to the right-pointing arrows having trivial kernels. This completes the proof.  
\end{proof}

Next, we describe a process which builds from a zigzag of homomorphisms (the base of the diagram below) a ``pyramid'' of homomorphisms:
$$\lby{eqA}\quad\vcenter{\xymatrix@!=1pt{
		&& && && X^{n}_0\ar@{-}[dl]\ar@{-}[dr]
		\\&& && & X^{n-1}_{0}\ar@{-}[dr] & & X^{n}_{1}\ar@{-}[dl]\ar@{-}[dr]
		\\&&  && X^{4}_{0}\ar@{.}[ur]\ar@{-}[dl]\ar@{-}[dr] & & X^{n-1}_{1}\ar@{-}[dr] && X^{n}_{2}\ar@{-}[dl]\ar@{-}[dr]
		\\& & & X^{3}_{0}\ar@{-}[dr] & & X^{4}_{1} \ar@{.}[ur]\ar@{-}[dl] & & X^{n-1}_{2}\ar@{-}[dr] & &   X^{n}_{3}\ar@{-}[dl]\ar@{-}[dr]
		\\& & X^{2}_{0}\ar@{-}[dl]\ar@{-}[ur]\ar@{-}[dr] & & X^{3}_{1} & & X^{4}_{2}\ar@{.}[ur] \ar@{-}[ul]\ar@{-}[dl]\ar@{-}[dr] & & X^{n-1}_{3}\ar@{-}[dr] & & X^{n}_{4}\ar@{-}[dl]\ar@{.}[dr]\\
		& X^{1}_{0}\ar@{-}[dr] & & X^{2}_{1}\ar@{-}[ur]\ar@{-}[dl] & & X^{3}_{2}\ar@{-}[ul]\ar@{-}[dr] & & X^{4}_{3}\ar@{.}[ur]\ar@{-}[dl] & & X^{n-1}_{4}\ar@{..}[dr] & & X^{n}_{n-1}\ar@{-}[dl] \\
		X_0^0 \ar@{-}[rr] \ar@{-}[ur] && X_1^1 && X_2^2\ar@{-}[ul]\ar@{-}[ur]\ar@{-}[ll]\ar@{-}[rr] && X_3^3 && X_4^4\ar@{.}[ur]\ar@{-}[ul]\ar@{-}[ll] \ar@{.}[rr] && X_{n-1}^{n-1} && X_n^n\ar@{-}[ul]\ar@{-}[ll]
}}$$
We construct this pyramid level by level, as follows. 

The bottom triangles
$$\xymatrix@!=5pt{ & X^{i}_{i-1} & \\ X_{i-1}^{i-1}\ar@{-}[rr]\ar@{-}[ur] & & X_{i}^i\ar@{-}[ul] }$$
are formed according to the following displays, depending on the direction of the base arrow,
$$\xymatrix@!=5pt{ & X^{i}_{i-1}\ar[dr]^-{m} & \\ X_{i-1}^{i-1}\ar[rr]_-{f_i}\ar[ur]^-{e} & & X_{i}^i }\quad\quad\quad\quad\quad\quad \xymatrix@!=5pt{ & X^{i}_{i-1}\ar[dl]_-{m} & \\ X_{i-1}^{i-1} & & X_{i}^i\ar[ul]_-{e}\ar[ll]^-{f_i} }$$
where $m$ is an embedding associated to the image of $f_i$ while $e$ is a projection associated to the kernel of $f_i,$ and $f_i=me.$ The possibility of forming such triangles is given by Axiom 4. As we build the pyramid bottom-up, we show on the way that all arrows in the pyramid which point downwards are embeddings and all arrows pointing upwards are projections. We can see that this is indeed the case so far from our construction of the triangles in the base $$\xymatrix@!=1pt{
	& X^{1}_{0}\ar@{-}[dr] & & X^{2}_{1}\ar@{-}[dl] & & X^{3}_{2}\ar@{-}[dr] & & X^{4}_{3}\ar@{-}[dl] & & & & X^{n}_{n-1}\ar@{-}[dl] \\
	X_0^0 \ar@{-}[rr] \ar@{-}[ur] && X_1^1 && X_2^2\ar@{-}[ul]\ar@{-}[ur]\ar@{-}[ll]\ar@{-}[rr] && X_3^3 && X_4^4\ar@{-}[ul]\ar@{-}[ll] \ar@{.}[rr] && X_{n-1}^{n-1} && X_n^n\ar@{-}[ul]\ar@{-}[ll]
}$$
of the pyramid. Next, we describe how to build the top wedge of a diamond
$$\lby{eqB}\quad\vcenter{\xymatrix@!=5pt{ & X^{i+1}_{j-1}\ar@{-}[dl]\ar@{-}[dr]  & \\ X^{i}_{j-1} & & X^{i+1}_j \\ & X^i_j\ar@{-}[ul]\ar@{-}[ur] & }}$$ once the bottom wedge has been built. When the arrows in the bottom wedge point upwards, they are projections. One then builds the diamond of the type that occurs in Lemma~\ref{lemA}, where $N$ and $R$ are taken to be the kernels of these projections. We refer to these diamonds as the \emph{projection diamonds} of the pyramid. The next case we would like to consider are the \emph{embedding diamonds}. These are dual to projection diamonds and they are built from a base wedge having arrows pointing downwards. There are two further types of diamonds that we will need to build (ignore the dotted arrows for now):
$$\xymatrix@!=5pt{ & X^{i+1}_{j-1}\ar[dr]^-{m_2}  & \\ X^{i}_{j-1}\ar[dr]_-{m_1}\ar[ur]^-{e_2}\ar@{..>}[rr] & & X^{i+1}_j \\ & X^i_j\ar[ur]_-{e_1} & }\quad\quad\quad\quad\quad \xymatrix@!=5pt{ & X^{i+1}_{j-1}\ar[dl]_-{m_2}  & \\ X^{i}_{j-1} & & X^{i+1}_j\ar@{..>}[ll]\ar[ul]_-{e_2}\ar[dl]^-{m_1}\\ & X^i_j\ar[ul]^-{e_1} & }$$
These are built by composing the homomorphisms in the bottom wedge to create the indicated dotted arrows and then building a triangle over it in the same way as in the initial step. Notice that in each of the four cases, the newly created arrows still have the same property that the upward directed ones are projections and the downward directed ones are embeddings. Thus we can build the entire pyramid layer by layer until we reach the top.

The pyramid construction is unique up to an isomorphism in the following sense:  
\begin{itemize}
	\item[\lby{obsK}] Two pyramids built from the same zigzag admit isomorphisms between the corresponding nodes so that the diagram formed from the two pyramids and the isomorphisms is commutative.
\end{itemize}
This can be easily verified by checking that such isomorphisms arise at each step in the building process from the isomorphisms created at the previous step. 

A zigzag portion of the pyramid
$$\xymatrix{ X_{p_1}^{q_1}\ar@{-}[r] & X_{p_2}^{q_2}\ar@{-}[r] & X_{p_3}^{q_3}\ar@{-}[r] &X_{p_4}^{q_4}\ar@{..}[r] & X_{p_{n-1}}^{q_{n-1}}\ar@{-}[r] & X_{p_{n}}^{q_{n}} }$$
connecting two nodes in the pyramid is said to be \emph{horizontal} when $\{p_{i+1}-p_i,q_{i+1}-q_i\}=\{0,1\}$ for each $i\in\{1,\dots,n-1\}.$ It is said to be \emph{vertical} when $\{p_{i}-p_{i+1},q_{i+1}-q_{i}\}=\{0,1\}$ for each $i\in\{1,\dots,n-1\}.$ The horizontal zigzag joining $X_0^0$ with $X^n_n,$ which runs up along the left side of the triangular outline of the pyramid, and then down along its right side, will be called the \emph{principal horizontal zigzag} of the pyramid. The vertical zigzag that joins $X_0^0$ with $X_0^n$ will be called the the \emph{left principal vertical zigzag} of the pyramid, while the vertical zigzag that joins $X_n^n$ with $X_0^n,$ will be called the \emph{right principal vertical zigzag} of the pyramid. Note that 
\begin{itemize}
	\item[\lby{obsAE}] the vertical zigzags of a pyramid are subquotients. 
\end{itemize}   

Thanks to Lemma~\ref{lemA} and its dual, commutativity of the pyramid, the fact that in the pyramid the upward directed arrows are projections and the downward directed arrows are embeddings and (\ref{obsAB}), subgroup chasing in the pyramid behaves as follows:
\begin{itemize}
	\item[\lby{obsM}] Chasing a subgroup from one node to another backward or forward along a horizontal zigzag in the pyramid does not depend on the choice of the path. 
	\item[\lby{obsP}] Chasing a subgroup from one node to another downward via a vertical zigzag and then upward via the same zigzag back to the original node gives back the original subgroup.
	\item[\lby{obsO}] Chasing a trivial/largest subgroup upward along a vertical zigzag always results in a trivial/largest subgroup.
\end{itemize}  
And furthermore, by Lemma~\ref{lemC} (and its dual) and (\ref{obsAE}) we also have:
\begin{itemize}
	\item[\lbn] Chasing a trivial/largest subgroup downward/upward along a vertical zigzag results in a trivial/largest subgroup if and only if the zigzag is collapsible in the downward/upward direction.
\end{itemize}  

When the principal horizontal zigzag in the pyramid constructed from a given zigzag is collapsible, we will say that the given zigzag \emph{induces a homomorphism} and the homomorphism induced by the collapsible principal horizontal zigzag is called the \emph{induced homomorphism} of the given zigzag. In view of (\ref{obsK}), and thanks to (\ref{obsI}) and (\ref{obsJ}), we have:
\begin{itemize}
	\item[\lby{obsAD}] for a given zigzag, the induced homomorphism is unique when it exists (and in particular, it does not depend on the actual pyramid). 
\end{itemize}
Using this and (\ref{obsL}), it is not difficult to check that 
\begin{itemize}
	\item[\lbn]
	when a zigzag is collapsible it induces a homomorphism in the above sense, and moreover, the induced homomorphism is the same as the homomorphism induced by the zigzag as a collapsible zigzag, in the sense defined earlier. 
\end{itemize} 
For this, one must first show that when the base of the pyramid is collapsible, all arrows in the pyramid that point south-west or north-west will be isomorphisms. This will imply that the principal horizontal zigzag is collapsible. After this, one can use commutativity of the pyramid to show that the homomorphism induced by the latter collapsible zigzag is the same as the one induced by the original collapsible zigzag.   

\begin{theorem}[Homomorphism Induction Theorem]
	For a zigzag to induce a homomorphism it is necessary and sufficient that chasing the trivial subgroup of the initial node forward along the zigzag results in the trivial subgroup of the final node, and chasing the largest subgroup of the final node backward along the zigzag results in the largest subgroup of the initial node. Moreover, when a zigzag induces a homomorphism, the induced homomorphism is unique and the direct and inverse image maps of the induced homomorphism are given by chasing a subgroup forward and backward, respectively, along the zigzag.  
\end{theorem}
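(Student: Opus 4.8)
The plan is to express both the conclusion and the hypotheses in terms of the principal horizontal zigzag $P$ of a pyramid built over the given zigzag, and then to split $P$ into its two halves. By definition the zigzag induces a homomorphism exactly when $P$ is collapsible, and $P$ runs up the left side and down the right side of the pyramid, so it is the concatenation $P=S\cdot C$ of the left principal vertical zigzag $S$, traversed upward from $X_0^0$ to the apex $X_0^n$, with the right principal vertical zigzag traversed downward from $X_0^n$ to $X_n^n$, which I will call $C$. By \ref{obsAE} these vertical zigzags are subquotients, so $S$ is a subquotient and $C$ is a cosubquotient; and since the arrows of $P$ are precisely the arrows of $S$ together with those of $C$, the zigzag $P$ is collapsible if and only if $S$ and $C$ are both collapsible.

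First I would show that the two chasing conditions in the statement, which are phrased along the base zigzag, may equally be read along $P$. Each base arrow factors as $f_i=m_ie_i$ through the first-level apex $X_{i-1}^i$, so chasing a subgroup along the base coincides with chasing it along the saw-tooth horizontal zigzag through these apices; by \ref{obsM} this agrees with chasing along $P$, since both are horizontal zigzags joining $X_0^0$ to $X_n^n$. Thus the first condition becomes ``chasing the trivial subgroup of $X_0^0$ forward along $P$ returns the trivial subgroup of $X_n^n$'', and the second becomes ``chasing the largest subgroup of $X_n^n$ backward along $P$ returns the largest subgroup of $X_0^0$''.

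The core of the proof is to match each of these to the collapsibility of one half of $P$. Chasing forward along $P$ first runs upward through $S$, and by \ref{obsO} the trivial subgroup is carried to the trivial subgroup along any upward vertical chase; so the first condition is insensitive to $S$ and reduces to whether chasing the trivial subgroup downward along the right vertical returns a trivial subgroup, which by the vertical collapsibility criterion extracted from Lemma \ref{lemC} (applied to the right vertical) holds exactly when $C$ is collapsible. Symmetrically, chasing backward along $P$ first runs upward through $C$, where \ref{obsO} now keeps the largest subgroup largest; so the second condition reduces to whether chasing the largest subgroup downward along the left vertical returns a largest subgroup, which by the criterion dual to the one just used holds exactly when $S$ is collapsible. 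Putting these together, the two conditions hold simultaneously if and only if both $C$ and $S$ are collapsible, i.e.\ if and only if $P$ is collapsible, which is the asserted necessary and sufficient condition.

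The remaining assertions are then immediate: uniqueness of the induced homomorphism is \ref{obsAD}, and its direct and inverse image maps are computed by \ref{obsRR} as chasing forward and backward along $P$, which by the first step is the same as chasing along the original zigzag. The step I expect to be the main obstacle is the central matching, because it requires simultaneously keeping track of four interlocking dualities --- forward versus backward chasing, the upward versus downward orientation of each vertical half, trivial versus largest subgroup, and embedding versus projection --- and ensuring that \ref{obsO} is invoked on the inert half while the Lemma \ref{lemC} criterion is invoked on the decisive half, each with the correct subgroup and orientation.
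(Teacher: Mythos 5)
Your proof is correct and takes essentially the same route as the paper's own (much terser) proof: reduce chasing along the base zigzag to chasing along the principal horizontal zigzag via (\ref{obsM}), then combine (\ref{obsO}) with Lemma~\ref{lemC} and its dual to identify the two chasing conditions with collapsibility of the two sides of the pyramid, and obtain the second part from (\ref{obsAD}), (\ref{obsM}) and (\ref{obsRR}). Your explicit decomposition of the principal horizontal zigzag into the two principal vertical zigzags, and the saw-tooth factorization $f_i=m_ie_i$ justifying the (\ref{obsM}) reduction, merely spell out details that the paper leaves implicit.
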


\begin{proof}
	To prove the first part of the theorem, apply (\ref{obsM}) to reduce chasing along the zigzag to chasing along the principal horizontal zigzag of the pyramid constructed from the zigzag. Then the result follows from (\ref{obsO}) and Lemma~\ref{lemC} together with its dual. The second part follows from (\ref{obsAD}), (\ref{obsM}) and (\ref{obsRR}).
\end{proof}

\begin{corollary}[Universal Isomorphism Theorem]\label{corA}
	If largest and smallest subgroups are preserved by chasing them from each end node of a zigzag to the opposite end node, then the zigzag induces an isomorphism between its end nodes. 
\end{corollary}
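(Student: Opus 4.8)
The plan is to derive the statement from the Homomorphism Induction Theorem, applied twice: once to the given zigzag and once to its opposite. Write the zigzag as in (\ref{diaB}), with initial node $X_0$ and final node $X_n$, and abbreviate by ``$\mathrm{fwd}$'' and ``$\mathrm{bwd}$'' the operations of chasing a subgroup forward (from $X_0$ to $X_n$) and backward (from $X_n$ to $X_0$) along it. The hypothesis consists of four statements: $\mathrm{fwd}$ sends the largest subgroup of $X_0$ to the largest subgroup of $X_n$ and the smallest to the smallest, while $\mathrm{bwd}$ sends the largest subgroup of $X_n$ to the largest subgroup of $X_0$ and the smallest to the smallest. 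The first thing to observe is that these four conditions split into the two pairs required by the Homomorphism Induction Theorem for two zigzags. Indeed, ``$\mathrm{fwd}$ preserves the smallest subgroup'' together with ``$\mathrm{bwd}$ preserves the largest subgroup'' are exactly the hypotheses of the Homomorphism Induction Theorem for the given zigzag, so the given zigzag induces a homomorphism $\varphi\colon X_0\to X_n$. For the opposite zigzag the initial and final nodes are interchanged, and reflecting the diagram turns every right-pointing arrow into a left-pointing one (and conversely), so that chasing forward along the opposite zigzag coincides with $\mathrm{bwd}$ and chasing backward along it coincides with $\mathrm{fwd}$. Hence the two remaining conditions, ``$\mathrm{bwd}$ preserves the smallest subgroup'' and ``$\mathrm{fwd}$ preserves the largest subgroup'', are precisely the hypotheses of the Homomorphism Induction Theorem for the opposite zigzag, which therefore induces a homomorphism $\psi\colon X_n\to X_0$.

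Next I would identify the image maps of these two homomorphisms using the second part of the Homomorphism Induction Theorem. For $\varphi$ the direct image map is $\mathrm{fwd}$ and the inverse image map is $\mathrm{bwd}$. For $\psi$, since chasing forward and backward along the opposite zigzag are $\mathrm{bwd}$ and $\mathrm{fwd}$ respectively, the direct image map is $\mathrm{bwd}$ and the inverse image map is $\mathrm{fwd}$. Thus the direct image map of $\varphi$ and the inverse image map of $\psi$ are the same operation $\mathrm{fwd}$, while the inverse image map of $\varphi$ and the direct image map of $\psi$ are the same operation $\mathrm{bwd}$.

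Now I would invoke that, by Axiom 1 (concretely Axiom 1.3), the direct and inverse image maps of any homomorphism form a monotone Galois connection. Applied to $\varphi$ this gives $\mathrm{fwd}(A)\subseteq B \Leftrightarrow A\subseteq\mathrm{bwd}(B)$, and applied to $\psi$ it gives $\mathrm{bwd}(B)\subseteq A \Leftrightarrow B\subseteq\mathrm{fwd}(A)$. So $\mathrm{fwd}$ is simultaneously a lower adjoint and an upper adjoint of $\mathrm{bwd}$. Combining the unit of the first connection, $A\subseteq\mathrm{bwd}(\mathrm{fwd}(A))$, with the counit of the second, $\mathrm{bwd}(\mathrm{fwd}(A))\subseteq A$, yields $\mathrm{bwd}(\mathrm{fwd}(A))=A$; symmetrically $\mathrm{fwd}(\mathrm{bwd}(B))=B$. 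Hence $\mathrm{fwd}$ and $\mathrm{bwd}$ are mutually inverse bijections of subgroup lattices. In particular the direct image map $\mathrm{fwd}$ of $\varphi$ is both injective and surjective, so by (\ref{obsF}) the homomorphism $\varphi$ is both an embedding and a projection, and therefore an isomorphism by (\ref{obsR}). Since $\varphi$ is by definition the homomorphism induced by the zigzag, this proves the statement.

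I expect the main obstacle to be the bookkeeping in the first paragraph: one has to be careful that reflecting the zigzag interchanges the roles of forward and backward chasing and of the two end nodes, so that the four symmetric hypotheses match up correctly with the two (asymmetric) hypotheses of the Homomorphism Induction Theorem for the zigzag and its opposite. Once this correspondence is pinned down, the remainder is the standard fact that a pair of maps each adjoint to the other on both sides are inverse order-isomorphisms.
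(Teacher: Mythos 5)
Your proof is correct, including the delicate bookkeeping about how reflection interchanges forward and backward chasing, but it finishes differently from both arguments sketched in the paper. The paper's primary route applies the Homomorphism Induction Theorem only once, to the given zigzag: the two hypotheses not consumed by that application then say precisely that the induced homomorphism has trivial kernel (its inverse image map is backward chasing, so its kernel is the backward chase of the trivial subgroup) and has the largest subgroup as its image (its direct image map is forward chasing), whence it is an embedding by (\ref{obsAC}), a projection by the dual statement (\ref{obsD}), and therefore an isomorphism by (\ref{obsR}). The paper's alternative route, like yours, applies the theorem to the zigzag and to its opposite, but concludes by asserting that the two induced homomorphisms are inverses of each other. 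Your finish is a third variant: you never compose the two induced homomorphisms at all; instead, the two instances of Axiom 1.3 exhibit forward and backward chasing as adjoint to each other on both sides, hence as inverse bijections between the subgroup lattices, and then (\ref{obsF}) and (\ref{obsR}) convert bijectivity of the direct image map into the conclusion that the induced homomorphism is an isomorphism. What this buys is a fully rigorous replacement for the paper's ``it is easy to see'' in the alternative route: you sidestep any appeal to uniqueness or composition of induced homomorphisms (such as (\ref{obsI}) or (\ref{obsJ})) by staying entirely at the level of lattice maps. What it costs is economy: the paper's first route needs a single application of the Homomorphism Induction Theorem and reads the isomorphism property directly off the two leftover hypotheses.
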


\begin{proof}
	This can be obtained directly from the homomorphism induction theorem, by applying Axioms~4 to the induced homomorphism (see (\ref{obsAC}) and (\ref{obsR})). Alternatively, apply the homomorphism induction theorem to the zigzag and its opposite zigzag. It is easy to see that these zigzags will induce morphisms which are inverses of each other.  
\end{proof}

We conclude this section by drawing links with group theory.
Consider a zigzag (\ref{diaB}) of ordinary group homomorphisms. Each homomorphism $f_i$ is a function and so it can be viewed as a relation, given by 
$$xfy\quad\Leftrightarrow\quad f(x)=y.$$
Let us write $f_i^\ast$ to represent the relation $f_i$ when the arrowhead at the $i$-th place in the zigzag appears on the right, and otherwise, to represent the opposite relation $f_i^\mathsf{op}.$ We will refer to the composite $$f_n^\ast\cdots f_4^\ast f_3^\ast f_2^\ast f_1 ^\ast$$
of relations as the relation induced by the zigzag.

\begin{theorem}\label{ThmC}
	A zigzag of ordinary group homomorphisms induces a homomorphism if and only if the induced relation is a function, and when this is the case, that function is the induced homomorphism.
\end{theorem}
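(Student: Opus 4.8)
The plan is to reduce the statement to the homomorphism induction theorem by translating subgroup chasing into relational images, and then to read off the two chasing conditions as single-valuedness and totality. Write $\rho=f_n^{\ast}\cdots f_1^{\ast}$ for the induced relation. First I would record the dictionary: for an ordinary homomorphism $f\colon X\to Y$, the direct image map $A\mapsto fA$ is the image map of the relation $f$, that is $fA=\{y\mid \exists a\in A,\ afy\}$, while the inverse image map $B\mapsto f^{-1}B$ is the image map of the opposite relation $f^{\mathsf{op}}$. Since images of relations compose, chasing a subgroup $S$ of $X_0$ forward along the zigzag yields exactly $\rho[S]$, and chasing a subgroup $T$ of $X_n$ backward yields $\rho^{-1}[T]$. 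I would also note that $\rho$ is a subgroup of $X_0\times X_n$: the graph of a homomorphism and its opposite are subgroups of the relevant products, and the composite of two relations that are subgroups of products is again a subgroup of a product.

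For the biconditional I would feed these translations into the homomorphism induction theorem. That theorem says the zigzag induces a homomorphism if and only if chasing the trivial subgroup of $X_0$ forward gives the trivial subgroup of $X_n$ and chasing the largest subgroup of $X_n$ backward gives the largest subgroup of $X_0$; by the dictionary these become $\rho[\{e\}]=\{e\}$ and $\rho^{-1}[X_n]=X_0$. The second condition is exactly that $\rho$ is total. For a subgroup $\rho\le X_0\times X_n$ the first condition is exactly single-valuedness: if $(x,y),(x,y')\in\rho$ then $(e,y^{-1}y')\in\rho$, so $\rho[\{e\}]=\{e\}$ forces $y=y'$, and conversely; moreover $(e,e)\in\rho$ automatically, so $e\in\rho[\{e\}]$. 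Hence the zigzag induces a homomorphism if and only if $\rho$ is total and single-valued, i.e. if and only if $\rho$ is a function.

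For the ``moreover'' I would identify the induced homomorphism with $\rho$. By construction the induced homomorphism $\varphi$ is the induced homomorphism of the collapsible principal horizontal zigzag, hence a composite of homomorphisms and inverses of isomorphisms; since graphs of functions compose and the graph of an inverse isomorphism is the opposite of its graph, the graph of $\varphi$ equals the relational composite along the principal horizontal zigzag. It therefore suffices to show this relational composite equals $\rho$, the relational composite along the base. This is a relational analogue of the path independence \ref{obsM}: it is enough to check, in each diamond of the pyramid, that the two horizontal relational composites agree. For the two diamonds built by composition this is immediate from associativity of relation composition, and for the projection and embedding diamonds it is the element-level form of Lemma~\ref{lemA}; for a projection diamond with $n,r,p$ surjective and $\mathsf{Ker}\,p=N\vee R=NR$, one checks directly that $y^{\mathsf{op}}x=r\,n^{\mathsf{op}}$ as relations, the inclusion requiring work being to produce, from $x(a)=y(b)$, an element $g\in G$ with $n(g)=a$ and $r(g)=b$, available precisely because $\mathsf{Ker}\,p=NR$. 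Refining the base by the factorizations $f_i=m_ie_i$ does not change the relational composite, so $\rho$ equals the graph of $\varphi$, whence $\varphi=\rho$; alternatively one needs only the graph of $\varphi$ to be contained in $\rho$, which together with totality of $\rho$ already gives equality.

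The hard part will be precisely this last identification. Agreement of the direct (and even inverse) image maps of $\varphi$ and $\rho$ does not by itself force $\varphi=\rho$ --- for instance $\mathrm{id}$ and $-\mathrm{id}$ on $\mathbb{Z}$ have identical direct and inverse image maps --- so one cannot finish by comparing chasing maps alone, and must use the explicit composite form of $\varphi$. The genuine content is thus the relational path independence in the pyramid, whose crux is the projection (and dually embedding) diamond identity $y^{\mathsf{op}}x=r\,n^{\mathsf{op}}$, with its surjective witness construction being the group-theoretic fact underlying Lemma~\ref{lemA}.
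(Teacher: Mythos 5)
Your proof is correct, but it splits into two halves of different character relative to the paper. For the biconditional, you take a genuinely different route: the paper never invokes the Homomorphism Induction Theorem here. Instead it first proves relational path-independence in the pyramid (the lemma you defer to the ``moreover'' part), concludes that the induced relation equals the relation induced by the principal horizontal zigzag, and then argues the converse by hand at the element level: totality forces the left-pointing embeddings on the left flank of the pyramid to be surjective, hence isomorphisms, and a similar argument forces the left-pointing projections on the right flank to have trivial kernels. Your route --- feeding the two chasing conditions of the Homomorphism Induction Theorem through the dictionary (forward chasing $=\rho[\,\cdot\,]$, backward chasing $=\rho^{-1}[\,\cdot\,]$) and then exploiting the observation, absent from the paper, that $\rho$ is a \emph{subgroup} of $X_0\times X_n$, so that $\rho[\{e\}]=\{e\}$ is precisely single-valuedness and $\rho^{-1}[X_n]=X_0$ is precisely totality --- is cleaner and more conceptual, and it buys the equivalence without any pyramid combinatorics. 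For the ``moreover'' part, your argument coincides with the paper's central one: you correctly recognize (the $\mathrm{id}$ versus $-\mathrm{id}$ example is exactly the right justification) that agreement of image maps cannot identify $\varphi$ with $\rho$, so one must prove relational path-independence diamond by diamond, with the projection-diamond witness construction (the element-level content of Lemma~\ref{lemA}, via $\mathsf{Ker}\,p=NR$) carrying the group-theoretic weight; this is the paper's proof of that half, with the embedding diamonds handled, as in the paper, by the set-theoretic fact for injective maps with $\mathsf{Im}\,p=\mathsf{Im}\,n\wedge\mathsf{Im}\,r$.

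One slip in your closing aside: ``$\mathrm{graph}(\varphi)\subseteq\rho$ together with totality of $\rho$'' does not give equality (take $\varphi=\mathrm{id}$ and $\rho=\mathbb{Z}\times\mathbb{Z}$); you need single-valuedness of $\rho$ for that inclusion, or else the opposite inclusion $\rho\subseteq\mathrm{graph}(\varphi)$ together with totality. Since your first half already establishes that $\rho$ is a function, either repair is immediate, so this does not affect correctness. It is worth noting, though, that the two inclusions are not equally priced: commutativity gives bottom-wedge $\subseteq$ top-wedge for projection diamonds but top-wedge $\subseteq$ bottom-wedge for embedding diamonds, so the inclusion $\rho\subseteq\mathrm{graph}(\varphi)$ needs only the easy projection-diamond direction plus the (purely set-theoretic) embedding-diamond argument --- combined with totality of $\rho$ from your first half, this variant would bypass the group-theoretic witness construction entirely, whereas the inclusion you chose, $\mathrm{graph}(\varphi)\subseteq\rho$, still requires it.
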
 

\begin{proof}
	First we would like to establish as a general fact that for any two horizontal zigzags joining one node of a pyramid with another, the induced relations coincide. It is easy to see why this is the case when these two zigzags consist only of arrows from the triangles in the base of the pyramid. Further up, the result reduces to proving that for each diamond (\ref{eqB}) occurring in the pyramid, the relation induced by its bottom wedge is the same as the relation induced by its top wedge. For embedding diamonds, this fact can be deduced from a similar fact about an arbitrary commutative diagram
	$$\xymatrix@=20pt{ & Z &  \\ X\ar@{<-}[ur]^-{x} & & Y\ar@{<-}[ul]_-{y} \\ & G\ar@{<-}[ul]^-{n}\ar@{<-}[ur]_-{r}\ar@{<-}[uu]|-{p} & }$$
	of injective functions between sets, where $\mathsf{Im}p=\mathsf{Im}n\wedge\mathsf{Im}r$ (this equality does hold in our case due to the construction of the embedding diamonds via the dual of Lemma~\ref{lemA}). For a projection diamond (\ref{eqD}), the fact that the relation induced by the bottom wedge is smaller than the relation induced by the top wedge follows straightforwardly from the commutativity of the diamond. To show that it is bigger, consider elements $a\in X$ and $b\in Y$ such that $x(a)=y(b).$ Since $n$ is a surjective map, we have $n(c)=a$ for some $c\in G.$ Now consider the element $br(c)^{-1}$ of $Y.$ This element belongs to the kernel of $y.$ Indeed, $$y(bn(c)^{-1})=y(b)(yr(c))^{-1}=y(b)(xn(c))^{-1}=y(b)x(a)^{-1}=1.$$ By Lemma~\ref{lemA}, we have the following equalities of subgroups:
	$$\mathsf{Ker}y=y^{-1}x1=rn^{-1}1=r\mathsf{Ker}n.$$
	Therefore, $br(c)^{-1}=r(d)$ for some element $d$ of $\mathsf{Ker}n.$ We are looking for an element $e$ of $G$ such that $n(e)=a$ and $r(e)=b.$ Take $e=dc.$ Then,
	$n(e)=n(d)n(c)=n(c)=a$ and $r(e)=r(d)r(c)=br(c)^{-1}r(c)=b.$ 
	
	Thus, we have proved that for any two horizontal zigzags joining one node of a pyramid (\ref{eqA}) with another, the induced relations coincide. In particular, this means that the relation induced by a zigzag of group homomorphisms is the same as the relation induced by the principal horizontal zigzag of the pyramid constructed from the original zigzag. If that zigzag is collapsible, then it is clear that the induced relation is precisely the induced homomorphism. Conversely, we can show that if the induced relation $R$ is a function then the zigzag in question is collapsible. Pick an element $a\in X_0^0.$ Since $R$ is a function, it must be related to some element in $X^n_n.$ This, in the case when the arrow joining $X_0^0$ with $X_0^1$ is left-pointing, will imply that $a$ belongs to the image of the left-pointing arrow. This would prove that this left-pointing arrow is surjective. However, the left-pointing arrows along the top left side of the pyramid are embeddings, and so we will get that this left-pointing arrow is an isomorphism. Since the right-pointing arrows along the top left side of the pyramid are projections and hence surjective maps, we can use a similar argument at the next step, to show that if the arrow connecting $X_0^1$ with $X_0^2$ is left-pointing, then it is an isomorphism. Continuing on like this, we would have proved that the top left side of the pyramid is a collapsible zigzag. A similar argument can be used to show that in the top right side of the pyramid all left-pointing arrows have trivial kernels and hence are isomorphisms.\end{proof}

\section{Classical isomorphism theorems}

In this section, after the following two lemmas which establish basic lattice properties of the normality relation, we show how the Universal Isomorphism Theorem (Corollary~\ref{corA}) allows one to deduce classical isomorphism theorems in our self-dual theory.  

\begin{lemma}\label{lemD}
	Let $G$ be any group and let $A,$ $B$ and $C$ be subgroups of $G$ such that $A\vartriangleleft B$ and $C$ is conormal. Then $A\wedge C\vartriangleleft B\wedge C.$ In particular, for $A\subseteq C\subseteq B,$ we obtain $A\vartriangleleft C.$ 
\end{lemma}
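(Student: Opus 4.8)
The plan is to verify the three conditions defining $A\wedge C\vartriangleleft B\wedge C$ separately, with all the real work going into the last. The inclusion $A\wedge C\subseteq B\wedge C$ is immediate from $A\subseteq B$ (part of $A\vartriangleleft B$) and monotonicity of meets. That $B\wedge C$ is conormal follows from Axiom 5, since $B$ is conormal (again part of $A\vartriangleleft B$) and $C$ is conormal by hypothesis.

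The substance is to show that $\iota_{B\wedge C}^{-1}(A\wedge C)$ is normal. First I would transport the whole situation into $B/1$ along the embedding $\iota_B\colon B/1\to G$ associated with $B$. By (\ref{obsA}) the subgroup $\iota_B^{-1}C$ is conormal in $B/1$; let $k$ be an embedding associated with it. Then $\iota_Bk$ is an embedding by (\ref{obsW}), and a short image computation (using $\mathsf{Im}k=\iota_B^{-1}C$, functoriality, and $\iota_B\iota_B^{-1}C=C\wedge\mathsf{Im}\iota_B$ from Axiom 2) gives $\mathsf{Im}(\iota_Bk)=C\wedge B=B\wedge C$, so $\iota_Bk$ is an embedding associated with $B\wedge C$. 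Since the normality of an inverse image does not depend on which embedding of a conormal subgroup is used --- by (\ref{obsX}) together with stability of normal subgroups under inverse images along isomorphisms --- I may compute with $\iota_{B\wedge C}=\iota_Bk$.

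A short computation then closes the argument. Using functoriality of inverse images (Axiom 1.4) and the fact that inverse images preserve meets,
$$\iota_{B\wedge C}^{-1}(A\wedge C)=k^{-1}\iota_B^{-1}(A\wedge C)=k^{-1}\bigl(\iota_B^{-1}A\wedge\iota_B^{-1}C\bigr).$$
Since $k^{-1}$ also preserves meets and $k^{-1}\iota_B^{-1}C=k^{-1}\mathsf{Im}k$ is the largest subgroup of the domain of $k$, this equals $k^{-1}\iota_B^{-1}A$. Now $\iota_B^{-1}A$ is normal by the defining condition of $A\vartriangleleft B$, and normal subgroups are stable under inverse images along arbitrary homomorphisms, so $k^{-1}\iota_B^{-1}A$ is normal. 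This gives $A\wedge C\vartriangleleft B\wedge C$. The stated special case is then immediate: when $A\subseteq C\subseteq B$ we have $A\wedge C=A$ and $B\wedge C=C$, so the conclusion reads $A\vartriangleleft C$.

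I expect the only genuine obstacle to be the bookkeeping in the second paragraph: arranging the embeddings so that the conormal factor $C$ becomes the whole group after restriction to $B/1$. This is exactly the mechanism that forces the meet of the normal subgroup $A$ with the merely conormal subgroup $C$ to pull back to something normal; once $\iota_{B\wedge C}$ has been factored through $\iota_B$, everything else is routine manipulation of Galois-connection identities.
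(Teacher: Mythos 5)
Your proof is correct, and its underlying mechanism is the same as the paper's: factor an embedding associated to $B\wedge C$ through $\iota_B$, note that the inverse image of $C$ along that embedding is the largest subgroup, so that $\iota_{B\wedge C}^{-1}(A\wedge C)$ reduces to the inverse image of the normal subgroup $\iota_B^{-1}A$ along the factoring homomorphism, and finish by stability of normal subgroups under inverse images. Where you differ is in how the factorization is obtained, and here the paper is markedly shorter: since $\mathsf{Im}\,\iota_{B\wedge C}\subseteq B\wedge C\subseteq B$, the universal property of $\iota_B$ (Axiom 3) directly yields a unique homomorphism $i$ with $\iota_B i=\iota_{B\wedge C}$, after which $i^{-1}\iota_B^{-1}A=\iota_{B\wedge C}^{-1}A=\iota_{B\wedge C}^{-1}A\wedge\iota_{B\wedge C}^{-1}C=\iota_{B\wedge C}^{-1}(A\wedge C)$ completes the argument. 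Your route --- choosing an embedding $k$ associated to $\iota_B^{-1}C$ (which requires (\ref{obsA}) to know this subgroup is conormal), computing $\mathsf{Im}(\iota_Bk)=B\wedge C$ via (\ref{obsW}) and Axiom 2, and then invoking (\ref{obsX}) together with invariance of normality under isomorphisms to trade $\iota_{B\wedge C}$ for $\iota_Bk$ --- is sound, but it does in three steps, with three auxiliary results, what the universal property does in one; moreover, the paper's factoring map $i$ never needs to be an embedding, since normal subgroups are stable under inverse images along \emph{arbitrary} homomorphisms, whereas your argument must track the embedding property of $k$ precisely so that (\ref{obsX}) applies. So the ``bookkeeping'' you flagged as the genuine obstacle is avoidable: the whole second paragraph of your proposal collapses to a single application of the universal property of $\iota_B$.
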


\begin{proof} Since both $B$ and $C$ are conormal, so is $B\wedge C$ by Axiom 5. By the universal property of $\iota_B,$ we have $\iota_Bi=\iota_{B\wedge C}$ for a unique \add{homo}morphism $i.$ Since $A\vartriangleleft B,$ we have $\iota_B^{-1}A$ is normal and therefore so is 
	$i^{-1}\iota_B^{-1}A=\iota_{B\wedge C}^{-1}A=\iota_{B\wedge C}^{-1}A\wedge\iota_{B\wedge C}^{-1}C=\iota_{B\wedge C}^{-1}(A\wedge C).$ This shows $A\wedge C\vartriangleleft B\wedge C.$ \end{proof}

\begin{lemma}\label{lemE}
	Let $A,$ $B$ and $C$ be subgroups of a group $G$ such that $A\vartriangleleft B$ and $C\vartriangleleft B\vee C.$ Then $A\vee C\vartriangleleft B\vee C.$ In particular, for $C\subseteq B$ the assumptions become $A\vartriangleleft B$ and $C\vartriangleleft B$ and they yield $A\vee C\vartriangleleft B.$  
\end{lemma}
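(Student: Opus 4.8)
The plan is to unwind the definition of the normality relation and reduce everything to a statement inside the subgroup $H=(B\vee C)/1$. By definition, $A\vee C\vartriangleleft B\vee C$ requires $B\vee C$ to be conormal --- which we already have from the hypothesis $C\vartriangleleft B\vee C$ --- together with $\iota^{-1}(A\vee C)$ being normal in $H$, where $\iota=\iota_{B\vee C}\colon H\to G$ is the embedding supplied by Axiom 3. I would set $A'=\iota^{-1}A$, $B'=\iota^{-1}B$, $C'=\iota^{-1}C$ and record three facts about these subgroups of $H$: first, $C'$ is normal in $H$, which is exactly the content of $C\vartriangleleft B\vee C$; second, $A'\subseteq B'$ and $A'\vartriangleleft B'$, obtained by applying (\ref{obsZ}) to $f=\iota$, for which one needs $B'=\iota^{-1}B$ to be conormal --- this holds by (\ref{obsA}), since $B$ is conormal and $\iota$ is an embedding; and third, crucially, $B'\vee C'=\top$, the largest subgroup of $H$. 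This last identity I would obtain from (\ref{obsU}), which gives $B'\vee C'=\iota^{-1}(B\vee C)=\iota^{-1}\mathsf{Im}\iota$, together with the computation $\iota^{-1}\mathsf{Im}\iota=\top\vee\mathsf{Ker}\iota=\top$ coming from Axiom 2. The same application of (\ref{obsU}) also gives $\iota^{-1}(A\vee C)=A'\vee C'$, so the entire problem becomes: show that $A'\vee C'$ is normal in $H$.

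Next I would project out $C'$. Let $\pi\colon H\to H/C'$ be the projection associated with the normal subgroup $C'$ (Axiom 3), so that $\mathsf{Ker}\pi=C'$ and hence $\pi C'=1$. Applying $\pi$ to the identity $B'\vee C'=\top$ and using that direct images preserve joins yields $\pi B'=\pi B'\vee\pi C'=\pi(B'\vee C')=\pi(\top)=\mathsf{Im}\pi=\top$. The purpose of this computation is to establish that the composite $\pi\iota_{B'}\colon B'/1\to H/C'$ is a projection: its image is $\pi\iota_{B'}(\top)=\pi(\mathsf{Im}\iota_{B'})=\pi B'=\top$, so $\pi\iota_{B'}$ is a projection by (\ref{obsD}). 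This step is the heart of the argument and the place where I expect the main difficulty. It is the axiomatic incarnation of the classical fact that $B$ surjects onto $(B\vee C)/C$, and everything hinges on the generation identity $B'\vee C'=\top$, which is precisely what pins $\iota$ to the subgroup $B\vee C$ rather than to some larger subgroup.

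With $\pi\iota_{B'}$ a projection in hand, I would transport normality across it. From $A'\vartriangleleft B'$ we have that $\iota_{B'}^{-1}A'$ is normal in $B'/1$, and since normal subgroups are stable under direct images along projections (\ref{obsA}), the subgroup $(\pi\iota_{B'})(\iota_{B'}^{-1}A')$ is normal in $H/C'$. This direct image simplifies, because $\iota_{B'}\iota_{B'}^{-1}A'=A'\wedge\mathsf{Im}\iota_{B'}=A'\wedge B'=A'$ by Axiom 2 and $A'\subseteq B'$, so $(\pi\iota_{B'})(\iota_{B'}^{-1}A')=\pi A'$; hence $\pi A'$ is normal in $H/C'$. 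Since $\pi(A'\vee C')=\pi A'\vee\pi C'=\pi A'$, the subgroup $\pi(A'\vee C')$ is normal, and because $C'=\mathsf{Ker}\pi\subseteq A'\vee C'$ we have $A'\vee C'=\pi^{-1}\pi(A'\vee C')$ by Axiom 2; as normal subgroups are stable under inverse images, $A'\vee C'$ is normal in $H$. This is exactly the assertion that $\iota^{-1}(A\vee C)$ is normal, giving $A\vee C\vartriangleleft B\vee C$. Finally, the ``in particular'' clause is immediate: when $C\subseteq B$ one has $B\vee C=B$, so the hypotheses read $A\vartriangleleft B$ and $C\vartriangleleft B$ and the conclusion reads $A\vee C\vartriangleleft B$.
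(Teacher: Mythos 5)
Your proof is correct, and its overall skeleton coincides with the paper's: both arguments reduce the problem along $\iota_{B\vee C}$ to showing that $\iota_{B\vee C}^{-1}(A\vee C)$ is normal in $H=(B\vee C)/1$, both first establish $\iota_{B\vee C}^{-1}A\vartriangleleft\iota_{B\vee C}^{-1}B$ via (\ref{obsZ}) together with Axiom~5 (in the guise of (\ref{obsA})), both hinge on the generation identity $\iota_{B\vee C}^{-1}B\vee\iota_{B\vee C}^{-1}C=(B\vee C)/1$ obtained from (\ref{obsU}) and Axiom~2, and both finish by pulling back along the projection $\pi=\pi_{\iota_{B\vee C}^{-1}C}$. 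Where you genuinely diverge is the middle transport step. The paper stays at the level of the normality \emph{relation}: it applies (\ref{obsAA}) to get $\pi\iota_{B\vee C}^{-1}A\vartriangleleft\pi\iota_{B\vee C}^{-1}B$, and then applies (\ref{obsZ}) a second time --- using that $\pi^{-1}\pi\iota_{B\vee C}^{-1}B=(B\vee C)/1$ is conormal --- to pull this relation back up to the largest subgroup of $H$. You instead descend to plain normal subgroups: you prove that the composite $\pi\iota_{B'}$ is a projection (via $\pi B'=\top$ and (\ref{obsD})), push the normal subgroup $\iota_{B'}^{-1}A'$ forward along it by (\ref{obsA}), and pull the result back using stability of normal subgroups under inverse images. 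In effect you have inlined a proof of exactly the instance of (\ref{obsAA}) that the paper invokes as a black box (the paper states (\ref{obsAA}) without proof, as ``easily obtained''), exploiting the special feature $B'\vee C'=\top$ of this situation, which is not available in the general setting of (\ref{obsAA}). What your route buys is self-containedness and transparent group-theoretic meaning --- your projection $\pi\iota_{B'}$ is precisely the axiomatic incarnation of the classical surjection of $B$ onto $(B\vee C)/C$; what the paper's route buys is brevity, since (\ref{obsAA}) and (\ref{obsZ}) accomplish the transport in two lines. One shared implicit step worth flagging: both proofs use $\mathsf{Ker}\pi_{C'}=C'$, whereas Axiom~3 only gives the inclusion $C'\subseteq\mathsf{Ker}\pi_{C'}$; the reverse inclusion follows from the universal property of $\pi_{C'}$ applied to any homomorphism with kernel $C'$, and the paper relies on the same fact elsewhere (e.g.\ in proving that Axiom~5 implies (\ref{obsA})), so this is not a gap by the paper's own standards.
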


\begin{proof}
	By (\ref{obsZ}) and Axiom~5, $\iota_{B\vee C}^{-1}A\vartriangleleft\iota_{B\vee C}^{-1}B.$ Since $C\vartriangleleft B\vee C,$ we have that $\iota_{B\vee C}^{-1}C$ is normal. By (\ref{obsAA}), $$\pi_{\iota_{B\vee C}^{-1}C}\iota_{B\vee C}^{-1}A\vartriangleleft \pi_{\iota_{B\vee C}^{-1}C}\iota_{B\vee C}^{-1}B$$ and since
	$$\pi_{\iota_{B\vee C}^{-1}C}^{-1}\pi_{\iota_{B\vee C}^{-1}C}\iota_{B\vee C}^{-1}B=\iota_{B\vee C}^{-1}B\vee \iota_{B\vee C}^{-1}C=(B\vee C)/1$$
	is conormal, by (\ref{obsZ}) we get
	$$\iota_{B\vee C}^{-1}(A\vee C)
	=\iota_{B\vee C}^{-1}A\vee \iota_{B\vee C}\vartriangleleft \pi_{\iota_{B\vee C}^{-1}C}^{-1}\pi_{\iota_{B\vee C}^{-1}C}\iota_{B\vee C}^{-1}B=(B\vee C)/1$$
	and so $A\vee C\vartriangleleft B\vee C,$ as desired.
\end{proof}

\begin{theorem}[Diamond Isomorphism Theorem] Consider two subgroups $A$ and $B$ of a group $G.$ If $B$ is conormal and $A\triangleleft A\vee B,$ then $A\wedge B\triangleleft B$ and there is an isomorphism $$B/(A\wedge B)\approx (A\vee B)/A.$$
	\label{2ist}
\end{theorem}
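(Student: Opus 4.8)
The plan is to derive the isomorphism from the Universal Isomorphism Theorem (Corollary~\ref{corA}), applied to a three-step zigzag, once the normality assertion $A\wedge B\triangleleft B$ has been secured.

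For that assertion, I would invoke Lemma~\ref{lemD} with the normality $A\triangleleft A\vee B$ and the conormal subgroup $B$ in the roles of its hypotheses $A\triangleleft B$ and $C$; it yields $A\wedge B\triangleleft(A\vee B)\wedge B$, and the absorption law $(A\vee B)\wedge B=B$ turns this into $A\wedge B\triangleleft B$. Consequently $B/(A\wedge B)=(B/1)/\iota_B^{-1}(A\wedge B)$ is well defined, and $(A\vee B)/A=((A\vee B)/1)/\iota_{A\vee B}^{-1}A$ is well defined because the hypothesis $A\triangleleft A\vee B$ already supplies both the conormality of $A\vee B$ and the normality of $\iota_{A\vee B}^{-1}A$.

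Next I would set up the zigzag
\[
B/(A\wedge B)\xleftarrow{\ \pi\ }B/1\xrightarrow{\ \iota'\ }(A\vee B)/1\xrightarrow{\ \pi'\ }(A\vee B)/A,
\]
in which $\pi$ and $\pi'$ are the two defining projections (so $\mathsf{Ker}\,\pi=\iota_B^{-1}(A\wedge B)$ and $\mathsf{Ker}\,\pi'=\iota_{A\vee B}^{-1}A$) and $\iota'$ is the embedding determined by $\iota_{A\vee B}\iota'=\iota_B$, which exists by the universal property of $\iota_{A\vee B}$ since $B\subseteq A\vee B$. It then remains to check the four chasing conditions of Corollary~\ref{corA}. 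Three of them are routine. For the smallest subgroup in either direction, I would track the chase down to the kernel of the final projection: for instance the forward chase of $1$ yields $\iota_{A\vee B}^{-1}(A\wedge B)$ at $(A\vee B)/1$ (using $\iota_{A\vee B}\iota'=\iota_B$ and Axiom~2), which lies inside $\mathsf{Ker}\,\pi'$ because $A\wedge B\subseteq A$, and hence maps to $1$; the backward chase is symmetric, using $\iota'^{-1}\iota_{A\vee B}^{-1}=\iota_B^{-1}$ and $\iota_B^{-1}A=\iota_B^{-1}(A\wedge B)$. The backward chase of the largest subgroup is likewise immediate, since inverse images preserve the top subgroup and a projection carries the top onto the top (by (\ref{obsD})).

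The main obstacle is the forward chase of the largest subgroup of $B/(A\wedge B)$. It passes to the top of $B/1$, then to $\iota'(\text{top})=\mathsf{Im}\,\iota'=\iota_{A\vee B}^{-1}B$, and finally to $\pi'(\iota_{A\vee B}^{-1}B)$, which must be shown to be the top of $(A\vee B)/A$. By Axiom~2 this reduces to proving that the two inverse images join to everything, namely
\[
\iota_{A\vee B}^{-1}B\vee\iota_{A\vee B}^{-1}A=\text{(top of }(A\vee B)/1\text{)}.
\]
This is precisely where inverse images would normally fail to preserve joins, so the computation must call on (\ref{obsU}): as $\iota_{A\vee B}$ is the embedding of the conormal subgroup $A\vee B$ and $A\vee B\subseteq A\vee B$, observation (\ref{obsU}) gives $\iota_{A\vee B}^{-1}A\vee\iota_{A\vee B}^{-1}B=\iota_{A\vee B}^{-1}(A\vee B)$, and the right-hand side is the top of $(A\vee B)/1$ since it is the inverse image of the image of $\iota_{A\vee B}$. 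With all four conditions in hand, Corollary~\ref{corA} produces the isomorphism $B/(A\wedge B)\approx(A\vee B)/A$.
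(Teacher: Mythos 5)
Your proof is correct and takes essentially the same route as the paper: Lemma~\ref{lemD} gives $A\wedge B\vartriangleleft(A\vee B)\wedge B=B$, and the isomorphism is produced by the Universal Isomorphism Theorem applied to a zigzag linking $B/(A\wedge B)$ to $(A\vee B)/A$ through $B/1$ and $(A\vee B)/1$ --- the paper's zigzag passes through $G$ via $\iota_B$ and $\iota_{A\vee B}$, which your factorization $\iota_{A\vee B}\iota'=\iota_B$ merely collapses into a single arrow inducing the identical subgroup chases. Your explicit verification of the four chasing conditions (in particular the appeal to (\ref{obsU}) for the forward chase of the largest subgroup) supplies details that the paper leaves to the reader.
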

\begin{proof} 
	Suppose $B$ is conormal and $A\triangleleft A\vee B.$ Then by Lemma~\ref{lemD}, $A\wedge B\vartriangleleft (A\vee B)\wedge B=B.$ The required isomorphism will be induced by the zigzag
	$$\xymatrix@=35pt{B/(A\wedge B) & B/1\ar[l]_-{\pi_{\iota_{B}^{-1}(A\wedge B)}}\ar[r]^-{\iota_B} & G & (A\vee B)/1\ar[l]_-{\iota_{A\vee B}}\ar[r]^-{\pi_{\iota_{A\vee B}^{-1}A}} & (A\vee B)/A. }$$
\end{proof}

\begin{theorem}[Double-Quotient Isomorphism Theorem]\label{thmA} Let $N$ be a normal subgroup of a group $G.$ For any subgroup $S$ of $G/N,$ there exists a subgroup $R$ of $G$ having the following properties: (i) $N\subseteq R$ and $\pi_NR=S$; (ii) if $S$ is conormal, then $R$ is conormal to $N$ and $N\backslash R=S/1$; (iii) $S$ is normal if and only if $R$ is normal, and when this is the case, there is an isomorphism $G/R\approx (G/N)/S.$
	\label{3ist}
\end{theorem}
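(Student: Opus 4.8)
The plan is to set $R=\pi_N^{-1}S$ and verify the three clauses in turn, using Axiom~2, the stability properties of normal and conormal subgroups, and, for the final isomorphism, the Universal Isomorphism Theorem (Corollary~\ref{corA}). For clause~(i): by Axiom~3 we have $N\subseteq\mathsf{Ker}\pi_N=\pi_N^{-1}1$, and since $1\subseteq S$, monotonicity of the inverse image map gives $N\subseteq\pi_N^{-1}S=R$. Moreover $\pi_N R=\pi_N\pi_N^{-1}S=S\wedge\mathsf{Im}\pi_N$ by Axiom~2, and since $\pi_N$ is a projection its image is the largest subgroup of $G/N$ by (\ref{obsD}); hence $S\wedge\mathsf{Im}\pi_N=S$ and $\pi_N R=S$.

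For clause~(ii), assume $S$ is conormal. Then $N$ is normal, $N\subseteq R$ by~(i), and $\pi_N R=S$ is conormal, which is exactly the requirement that $R$ be conormal to $N$; so $N\backslash R$ is defined. The coquotient is the dual of the subquotient, so dualizing the defining formula $A/B=(A/1)/\iota_A^{-1}B$ computes it as $N\backslash R=(\pi_N R)/1$; substituting $\pi_N R=S$ gives the desired equality $N\backslash R=S/1$.

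For clause~(iii), the equivalence is immediate from the two stability principles: if $S$ is normal then $R=\pi_N^{-1}S$ is normal because normal subgroups are stable under inverse images, and conversely if $R$ is normal then $S=\pi_N R$ is normal by~(\ref{obsA}), since $\pi_N$ is a projection and normal subgroups are stable under direct images along projections. When both are normal, I would obtain the isomorphism by applying Corollary~\ref{corA} to the zigzag
$$\xymatrix@C=2.2pc{G/R & G\ar[l]^-{\pi_R}\ar[r]^-{\pi_N} & G/N\ar[r]^-{\pi_S} & (G/N)/S.}$$
Chasing the smallest subgroup of $G/R$ to the far end gives $\pi_R^{-1}1=R$, then $\pi_N R=S$, then $\pi_S S=1$ (the last step because $S=\mathsf{Ker}\pi_S$ and a projection carries its kernel to the trivial subgroup), while chasing the largest subgroup gives $G$, then $G/N$, then $(G/N)/S$, using that inverse images of largest subgroups are largest and that projections carry the largest subgroup to the largest subgroup. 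The two backward chases are dual: the smallest subgroup of $(G/N)/S$ pulls back to $\pi_S^{-1}1=S$, then to $\pi_N^{-1}S=R$, then to $\pi_R R=1$, and the largest subgroup pulls back to the largest subgroup at each node. Thus largest and smallest subgroups are preserved in both directions, and Corollary~\ref{corA} yields $G/R\approx(G/N)/S$.

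The computations for~(i) and the equivalence in~(iii) are routine. The two delicate points are the correct unwinding of the coquotient in~(ii) --- needed to obtain the literal equality $N\backslash R=S/1$ rather than a mere isomorphism --- and the bookkeeping of the four chasing computations in~(iii) that feed the Universal Isomorphism Theorem. I expect the coquotient identification to be the main conceptual obstacle, since it rests on applying duality correctly to the definition of the subquotient construction.
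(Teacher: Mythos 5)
Your proposal is correct and takes essentially the same route as the paper: the same choice $R=\pi_N^{-1}S$, the same stability arguments (the dual of conormal-stability under direct images for one direction, and stability of normal subgroups under direct images along projections for the other), and the same zigzag $G/R \leftarrow G \rightarrow G/N \rightarrow (G/N)/S$ fed into the Universal Isomorphism Theorem. The only difference is that you spell out the subgroup chases and the unwinding of the coquotient $N\backslash R=(\pi_N R)/1$, which the paper leaves implicit ("(ii) as an immediate consequence of (i)"), and these details check out.
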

\begin{proof}
	Let $S$ be a subgroup of $G/N.$ By defining $R=\pi^{-1}_NS,$ we get (i), and also (ii) as an immediate consequence of (i). If $S$ is normal then $R$ is also normal by the dual of (\ref{obsV}). Conversely if $R$ is normal then $S$ is normal by (\ref{obsA}). The desired isomorphism is induced by the zigzag
	$$\xymatrix{G/R & G\ar[r]^-{\pi_N}\ar[l]_-{\pi_R} & G/N\ar[r]^-{\pi_S} & (G/N)/S.}$$
\end{proof}

The following isomorphism theorem from \cite{MB99} generalizes part (iii) of the Double Quotient Isomorphism Theorem: 

\begin{theorem}\label{thmB}
	Consider any group homomorphism $f\colon A \to B,$ and subgroups
	\[\mathsf{Ker} f \subseteq W \subseteq X\]
	of $A,$ where $X$ is conormal. Then $W \vartriangleleft X$ if and only if $fW \vartriangleleft fX,$ and when this is the case, there is an isomorphism
	$X/W \approx fX/fW.$
	\label{mac}
\end{theorem}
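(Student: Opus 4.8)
The plan is to treat the two assertions separately: first the normality equivalence $W\vartriangleleft X\Leftrightarrow fW\vartriangleleft fX$, and then, assuming it holds, the isomorphism, which I would extract from the Universal Isomorphism Theorem (Corollary~\ref{corA}) applied to an explicit zigzag. The device that ties everything together for the equivalence is the homomorphism that $f$ induces between $X/1$ and $fX/1$. First I would note $fX$ is conormal by (\ref{obsV}), so $\iota_{fX}$ exists. Since $\mathsf{Im}(f\iota_X)=f(\mathsf{Im}\iota_X)=fX=\mathsf{Im}\iota_{fX}$, the universal property of $\iota_{fX}$ yields a unique $\bar f\colon X/1\to fX/1$ with $f\iota_X=\iota_{fX}\bar f$; as $\iota_{fX}\mathsf{Im}\bar f=fX=\iota_{fX}(\text{top})$ and the direct image map of the embedding $\iota_{fX}$ is injective (\ref{obsF}), $\mathsf{Im}\bar f$ is the largest subgroup of $fX/1$, so $\bar f$ is a projection by (\ref{obsD}).

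Two computations then drive the equivalence. Using Axiom~2 ($\iota_X\iota_X^{-1}W=W\wedge X=W$ and $\iota_{fX}\iota_{fX}^{-1}fW=fW$) together with the injectivity of the direct image map of $\iota_{fX}$, one obtains $\bar f(\iota_X^{-1}W)=\iota_{fX}^{-1}fW$; and pulling back the trivial subgroup through $f\iota_X=\iota_{fX}\bar f$ gives $\mathsf{Ker}\bar f=\iota_X^{-1}\mathsf{Ker}f\subseteq\iota_X^{-1}W$, where the inclusion uses the hypothesis $\mathsf{Ker}f\subseteq W$. With these, the equivalence is immediate: if $\iota_X^{-1}W$ is normal then $\iota_{fX}^{-1}fW=\bar f(\iota_X^{-1}W)$ is normal by (\ref{obsA}) (as $\bar f$ is a projection); conversely, if $\iota_{fX}^{-1}fW$ is normal then so is its inverse image $\bar f^{-1}(\iota_{fX}^{-1}fW)=\bar f^{-1}\bar f(\iota_X^{-1}W)=\iota_X^{-1}W\vee\mathsf{Ker}\bar f=\iota_X^{-1}W$, the last step being Axiom~2 combined with $\mathsf{Ker}\bar f\subseteq\iota_X^{-1}W$, since normal subgroups are stable under arbitrary inverse images. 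As $X$ and $fX$ are both conormal, this reads precisely $W\vartriangleleft X\Leftrightarrow fW\vartriangleleft fX$.

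Assuming now the normality holds, so that $X/W$ and $fX/fW$ are defined, I would build the zigzag
$$\xymatrix@C=1.4pc{ X/W & X/1\ar[l]_-{\pi_{\iota_X^{-1}W}}\ar[r]^-{\iota_X} & A\ar[r]^-{f} & B & fX/1\ar[l]_-{\iota_{fX}}\ar[r]^-{\pi_{\iota_{fX}^{-1}fW}} & fX/fW }$$
and verify the four hypotheses of Corollary~\ref{corA} by direct chasing. Forward-chasing the trivial subgroup of $X/W$ runs $\iota_X^{-1}W\mapsto W\mapsto fW\mapsto\iota_{fX}^{-1}fW\mapsto 1$, while forward-chasing the largest subgroup runs through $X\mapsto fX$ up to the top of $fX/fW$; the two backward chases are symmetric, the only additional inputs being $f^{-1}fW=W\vee\mathsf{Ker}f=W$ and $f^{-1}fX=X\vee\mathsf{Ker}f=X$, both valid because $\mathsf{Ker}f\subseteq W\subseteq X$. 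Hence Corollary~\ref{corA} yields the isomorphism $X/W\approx fX/fW$.

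The chasing in the last paragraph is routine, being forced by Axiom~2 and the inclusions $\mathsf{Ker}f\subseteq W\subseteq X$. The genuine content lies in the equivalence, and specifically in the identity $\bar f^{-1}(\iota_{fX}^{-1}fW)=\iota_X^{-1}W$: it holds only because $\mathsf{Ker}\bar f=\iota_X^{-1}\mathsf{Ker}f\subseteq\iota_X^{-1}W$, which in turn depends on the hypothesis $\mathsf{Ker}f\subseteq W$. Without this containment the converse implication $fW\vartriangleleft fX\Rightarrow W\vartriangleleft X$ would break down, so this is the step I would treat most carefully.
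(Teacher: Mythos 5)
Your proof is correct and is essentially the paper's own argument: the paper also works with the projection $X/1\to fX/1$ induced by $f$ (obtained there from Axiom 4 as the composite $h\pi_{\mathsf{Ker}(f\iota_X)}$ rather than from the universal property of $\iota_{fX}$, but it is the same morphism), proves the forward implication by pushing $\iota_X^{-1}W$ along it via (\ref{obsA}) and identifying the result with $\iota_{fX}^{-1}fW$, and induces the isomorphism from exactly the same zigzag via the Universal Isomorphism Theorem. The only real deviation is your converse, which pulls $\iota_{fX}^{-1}fW$ back along $\bar f$ and uses $\mathsf{Ker}\bar f\subseteq\iota_X^{-1}W$; the paper gets it in one line by applying (\ref{obsZ}) directly to $f$, writing $W=f^{-1}fW\vartriangleleft f^{-1}fX=X$ --- both arguments ultimately rest on stability of normal subgroups under inverse images, so this is a minor variation rather than a different route.
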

\begin{proof} Notice that since $X$ is a conormal subgroup of $A,$ $\mathsf{Im}(f\iota_X)=fX.$ Therefore, applying Axiom~4, we obtain a commutative diagram
	$$
	\xymatrix{ X/1\ar[r]^-{\iota_{X}}\ar[d]_-{\pi_{\mathsf{Ker}(f\iota_X)}} & A\ar[r]^{f} & B\\ (X/1)/\mathsf{Ker}(f\iota_X)\ar[rr]_-{h} & & fX/1\ar[u]_-{\iota_{fX}} \\
	}
	$$ where $h$ is an isomorphism.
	Suppose $W\vartriangleleft X.$ Then $\iota_X^{-1}W$ is a normal subgroup of $X/1.$ By (\ref{obsA}), $h\pi_{\mathsf{Ker}(f\iota_X)}\iota_X^{-1}W$ is a normal subgroup of $fX/1.$ The following calculation then completes the proof of $fW\vartriangleleft fX$: 
	\begin{align*}
	h\pi_{\mathsf{Ker}(f\iota_X)}\iota_X^{-1}W &= h\pi_{\mathsf{Ker}(f\iota_X)}\iota_X^{-1}(W\vee\mathsf{Ker}f)\\
	&= h\pi_{\mathsf{Ker}(f\iota_X)}\iota_X^{-1}f^{-1}fW\\
	&= h\pi_{\mathsf{Ker}(f\iota_X)}\pi_{\mathsf{Ker}(f\iota_X)}^{-1} h^{-1}\iota_{fX}^{-1} fW &\textrm{[ by (\ref{obsAB}) ]} \\
	&= hh^{-1}\iota_{fX}^{-1} fW\\
	&= \iota_{fX}^{-1} fW.
	\end{align*}
	Conversely, if $fW\vartriangleleft fX,$ then applying (\ref{obsZ}) we get $$W=W\vee\mathsf{Ker}f=f^{-1}fW\vartriangleleft f^{-1}fX=X\vee\mathsf{Ker}f=X$$
	since it is given that $X$ is conormal. The required isomorphism will be induced by the zigzag
	$$
	\xymatrix@=35pt{X/W & X/1\ar[l]_{\pi_{\iota^{-1}_XW}}\ar[r]^{\iota_X} & A\ar[r]^{f} & B & fX/1\ar[l]_{\iota_{fX}}\ar[r]^{\pi_{\iota^{-1}_{fX}fW}} & fX/fW.
	}
	$$	
\end{proof}

\begin{theorem}[Butterfly Lemma]
	Let $S'\vartriangleleft S$ and $T' \vartriangleleft T$ be conormal subgroups of a group $G$ \add{such that $S'\vee(S\wedge T)$ and $(S\wedge T)\vee T'$ are also conormal}. Then
	\begin{align*}
	S'\vee(S\wedge T') & \vartriangleleft S' \vee (S \wedge  T),\\
	(S'\wedge  T)\vee T' & \vartriangleleft (S \wedge  T)\vee T',
	\end{align*}
	and there is an isomorphism
	\[\frac{S'\vee(S\wedge  T)}{S'\vee(S\wedge  T')} \approx 
	\frac{(S\wedge  T)\vee T'}{(S'\wedge  T)\vee T'}.\]
\end{theorem}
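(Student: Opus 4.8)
The plan is to first pin down the two normality relations using the lattice lemmas, and then to realise the asserted isomorphism as the homomorphism induced by an explicit five-node zigzag, to which I apply the Universal Isomorphism Theorem (Corollary~\ref{corA}). Write $M=S\wedge T$. Since $T'\vartriangleleft T$ and $S$ is conormal, Lemma~\ref{lemD} gives $S\wedge T'\vartriangleleft S\wedge T=M$; since $S'\vartriangleleft S$, $S'\subseteq S'\vee M\subseteq S$, and $S'\vee M$ is conormal by hypothesis, the ``in particular'' clause of Lemma~\ref{lemD} gives $S'\vartriangleleft S'\vee M$. Feeding $A=S\wedge T'$, $B=M$, $C=S'$ into Lemma~\ref{lemE} then yields $S'\vee(S\wedge T')\vartriangleleft S'\vee M$, which is the first assertion; the second, $(S'\wedge T)\vee T'\vartriangleleft(S\wedge T)\vee T'$, follows by the symmetric argument (interchanging $S\leftrightarrow T$ and $S'\leftrightarrow T'$) using that $(S\wedge T)\vee T'$ is conormal. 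In particular both expressions in the statement are legitimate subquotients, $P:=(S'\vee M)/(S'\vee(S\wedge T'))$ and $Q:=(M\vee T')/((S'\wedge T)\vee T')$.

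Next I would connect $P$ to $Q$ by the zigzag
$$P\xleftarrow{\ \pi_1\ }(S'\vee M)/1\xleftarrow{\ j\ }M/1\xrightarrow{\ k\ }(M\vee T')/1\xrightarrow{\ \pi_2\ }Q,$$
where $\pi_1,\pi_2$ are the projections presenting $P$ and $Q$, and $j,k$ are the embeddings of $M/1$ into the two conormal ambients $S'\vee M$ and $M\vee T'$ determined by $\iota_{S'\vee M}\,j=\iota_M=\iota_{M\vee T'}\,k$ (here $M$ is conormal by Axiom 5, and $j,k$ are embeddings by (\ref{obsT})). By Corollary~\ref{corA} it then suffices to verify that chasing the smallest and the largest subgroups from either end of this zigzag to the other returns the smallest and the largest subgroups, respectively.

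The chases of the largest subgroups are routine: using that $\pi_1,\pi_2$ are projections, that $j,k$ are embeddings, and (\ref{obsU}) to split inverse images of joins along embeddings, each such chase reaches the top because $M\vee\big((S'\wedge T)\vee T'\big)=M\vee T'$ and dually $M\vee\big(S'\vee(S\wedge T')\big)=S'\vee M$. The chase of the smallest subgroup of $P$, by contrast, reduces through Axiom 1.3, Axiom 1.4 and $\iota_{S'\vee M}\,j=\iota_M=\iota_{M\vee T'}\,k$ to the single inclusion
$$\big(S'\vee(S\wedge T')\big)\wedge M\ \subseteq\ (S'\wedge T)\vee T',$$
and the chase from the $Q$ end reduces to its mirror image.

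This last inclusion is where the work lies, and I expect it to be the main obstacle. It is an instance of the Dedekind modular law, which is \emph{not} a formal consequence of the lattice axioms, since subgroup lattices need not be modular; what makes it true here is precisely the normality $S\wedge T'\vartriangleleft M$ established above (together with $S'\wedge T\vartriangleleft M$), giving the identity $\big(S'\vee(S\wedge T')\big)\wedge M=(S'\wedge T)\vee(S\wedge T')$ because the subgroup pulled through the meet is normal. The naive computation along the base keeps reintroducing $S'$ (since $S'$ lies outside $M$), so I would instead discharge the inclusion by performing the chase through the pyramid underlying Corollary~\ref{corA}: the relevant collapse is then supplied by the projection-diamond computation of Lemma~\ref{lemA}, whose conclusion $y^{-1}xS=rn^{-1}S$ is exactly the axiomatic form of this modular identity, together with the path-independence (\ref{obsM}). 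Once this inclusion and its mirror are in hand, all four chasing conditions hold and Corollary~\ref{corA} yields the desired isomorphism $P\approx Q$.
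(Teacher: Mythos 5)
Your first half is correct and is essentially the paper's own argument: the paper likewise gets $S'\vartriangleleft S'\vee(S\wedge T)$ from Lemma~\ref{lemD} (using the hypothesis that $S'\vee(S\wedge T)$ is conormal) and then $S'\vee(S\wedge T')\vartriangleleft S'\vee(S\wedge T)$ from Lemma~\ref{lemE}, with the symmetric argument on the $T$ side. The gap is in the isomorphism half. You have correctly reduced Corollary~\ref{corA} for your zigzag to the two inclusions $\bigl(S'\vee(S\wedge T')\bigr)\wedge M\subseteq (S'\wedge T)\vee T'$ and its mirror, but neither of your proposed ways of discharging them is a proof. First, appealing to $S\wedge T'\vartriangleleft M$ and $S'\wedge T\vartriangleleft M$ does not work in this theory: there are no products of subgroups here, only joins, and the only modularity available is the Restricted Modular Law (Lemma~\ref{lemB}), whose hypotheses require genuine normality/conormality in the ambient group, not the relative normality you cite. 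Second, ``performing the chase through the pyramid'' is circular: by path-independence (\ref{obsM}), chasing through the pyramid computes exactly the same subgroup as chasing along the base, so the pyramid cannot make a needed inclusion true --- it only relocates it. Concretely, in the pyramid over your zigzag the apex diamond is the projection diamond built over a copy of $M/1$ with kernels $N=\iota_M^{-1}\bigl(S'\vee(S\wedge T')\bigr)$ and $R=\iota_M^{-1}\bigl((S'\wedge T)\vee T'\bigr)$, and all that Lemma~\ref{lemA} gives is that the kernel of the problematic left-pointing top arrow equals the direct image of $N$ under the projection with kernel $R$; collapsibility still requires $N\subseteq R$, which is your inclusion verbatim. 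The pyramid always exists --- what must be proven is that it collapses, and that is precisely the inclusion you left unproven.

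This is also exactly where your route diverges from the paper's, and why the paper's zigzag is chosen as it is. The paper does not connect the two wings through $M/1$; it connects each wing to the middle subquotient $\frac{S\wedge T}{(S'\wedge T)\vee(S\wedge T')}$ by a zigzag running through $G$, $S/1$ and $S/S'$ (and symmetrically through $T/1$, $T/T'$). There the chased subgroup $S'\vee(S\wedge T')$ gets pulled back along $\iota_S$, and since \emph{both} joinands lie below $S$, observation (\ref{obsU}) splits the pullback into $\iota_S^{-1}S'\vee\iota_S^{-1}(S\wedge T')$, after which $\pi_{\iota_S^{-1}S'}$ kills the $S'$ part --- no modular law is needed. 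In your zigzag the pullback is along $\iota_M$ with $S'\not\subseteq M$, so (\ref{obsU}) is unavailable and Dedekind-type modularity becomes unavoidable. Your proof can be completed in two ways: switch to the paper's zigzag through the middle subquotient, or keep your zigzag and prove the missing identity by applying Lemma~\ref{lemB} inside $S/1$ --- where $\iota_S^{-1}S'$ is normal (this is what $S'\vartriangleleft S$ means) and $\iota_S^{-1}(S\wedge T)$ is conormal by Axiom~5 and (\ref{obsA}) --- and transporting along $\iota_S$ using (\ref{obsU}) and Axiom~2, with the symmetric argument inside $T/1$; this yields $N=R=\iota_M^{-1}\bigl((S'\wedge T)\vee(S\wedge T')\bigr)$. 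Note that the driver of this identity is $S'\vartriangleleft S$, not the normalities you invoked, and note also that the paper explicitly remarks (after Lemma~\ref{lemB}) that its own proof was arranged to avoid the Restricted Modular Law, so the repaired version of your argument is precisely the ``standard'' proof the paper chose not to give.
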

\begin{proof} From the zigzag of subquotients determined by the normality relations $S'\vartriangleleft S$ and $T'\vartriangleleft T$, we build the pyramid ``downwards'' (for now, ignore the line above the pyramid):
	$$\xymatrix@r@!=15pt{ & \frac{S' \vee (S \wedge  T)}{S' \vee (S \wedge  T')} & & \frac{S' \vee (S \wedge  T)}{1}\ar@{-->}[ll]\ar@{-->}[dr] & & \frac{(S \wedge  T)\vee T'}{1}\ar[dl]\ar[rr] & & \frac{(S \wedge  T)\vee T'}{(S' \wedge  T)\vee T'} & \\ \displaystyle\frac{S}{S'}\ar@{=}[dr] & & \displaystyle\frac{S}{1}\ar@{=}[dr]\ar[dl]\ar[rr]^-{\iota_S}\ar[ll]_-{\pi_{\iota_S^{-1}S'}} & & G & & \displaystyle\frac{T}{1}\ar[rd]\ar@{=}[dl]\ar[ll]_-{\iota_T}\ar[rr]^-{\pi_{\iota_T^{-1}T'}} & & \displaystyle\frac{T}{T'}\ar@{=}[dl] \\ & \displaystyle\frac{S}{S'}\ar@{=}[dr] & & \displaystyle\frac{S}{1}\ar@{-->}[ur]\ar@{-->}[dl] & & \displaystyle\frac{T}{1}\ar[rd]\ar[ul]\ar[dr] & & \displaystyle\frac{T}{T'}\ar@{=}[dl] & \\ & & \displaystyle\frac{S}{S'} & & \displaystyle\frac{S\wedge T}{1}\ar@{..>}[ll]_-{f}\ar@{..>}[rr]^-{g}\ar[ul]\ar[ur]\ar@{..>}[uu]|-{\iota_{S\wedge T}}\ar@{..>}[dd]|-{\pi_{\mathsf{Ker}f\vee\mathsf{Ker}g}}\ar[dl]_-{\pi_{\mathsf{Ker}f}}\ar[dr]^-{\pi_{\mathsf{Ker}g}}\ar@{}[ll]\ar@{}[rr]  & & \displaystyle\frac{T}{T'} & & \\ & & & \displaystyle\frac{S\wedge T}{S'\wedge T}\ar@{-->}[dr]\ar@{-->}[ul] & & \displaystyle\frac{S\wedge T}{S\wedge T'}\ar[dl]\ar[ur] & & & \\ & & & & \displaystyle\frac{S\wedge T}{(S'\wedge T)\vee(S\wedge T')} & & & & }$$
	That the codomains of the bottom three $\pi$'s are the indicated quotients can be easily verified by chasing subgroups. This verification also gives $(S\wedge  T')\vee (S'\wedge  T)\vartriangleleft S \wedge  T$ and $S\wedge T'\vartriangleleft S\wedge T$. By Lemma~\ref{lemD}, $S'\vartriangleleft S$ implies $S'\vartriangleleft S'\vee(S\wedge T)$ and then by Lemma~\ref{lemE}, $S'\vee(S\wedge T') \vartriangleleft S' \vee (S \wedge  T)$. Symmetrically one has $S'\wedge T\vartriangleleft S\wedge T$ and $T'\vee(S'\wedge T) \vartriangleleft T' \vee (S \wedge  T).$ Next, we form the subquotients arising from these normality relations (the antennae of the butterfly). The zigzag indicated by the dashed arrows and its horizontal reflection will then induce isomorphisms 
	$$\frac{S'\vee(S\wedge  T)}{S'\vee(S\wedge  T')} \approx \frac{S\wedge  T}{(S'\wedge  T)\vee(S\wedge  T')}\approx 
	\frac{T'\vee(S\wedge  T)}{T'\vee(S'\wedge  T)}.$$
\end{proof}

\section{Concluding remarks} 

It is not difficult to verify that the theory presented in this paper will work just as well for other group-like structures such as rings without identity, algebras, modules and \add{more generally,} $\Omega$-groups, by replacing subgroups with relevant substructures (subrings in the case of rings, submodules in the case of modules, etc.) and group homomorphisms with relevant structure-preserving maps (ring homomorphisms, module homomorphisms, etc.). More generally, objects, subobjects and morphisms in an arbitrary semi-abelian category in the sense of \cite{JMT02} would give rise to a model for our theory. \add{Among contexts covered by our theory and not covered by the theory of semi-abelian categories is the context of rings with identity. For these, seen as ``groups'' in our theory, one must take additive subgroups as ``subgroups''. Then, ``normal subgroups'' will be ideals and ``conormal subgroups'' will be subrings. Our homomorphism theorems would recover standard homomorphism theorems for rings with identity, which rely on interplay between subrings and ideals (see e.g.~\cite{J85}).}

\add{ An attempt to analyze what breaks duality in a semi-abelian category would take one very close to Mac~Lane's remark from \cite{M50} that for non-abelian groups,}
\begin{itemize}
	\item[]\add{\emph{The primitive concepts of a category are not sufficient to formulate all the duality phenomena, and in particular do not provide for ``subgroups versus quotient groups,'' or ``homomorphisms onto versus isomorphisms into.''}}
\end{itemize}
\add{Restricting to the case of groups alone and ignoring other group-like structures, the obstacle pointed out above is actually avoidable, since ``isomorphisms into'' (i.e.~injective group homomorphisms) are the same as monomorphisms and ``homomorphisms onto'' are the same as epimorphisms (see e.g.~Exercise~5 in Section~5 of Chapter~I in \cite{M98}). This is precisely the approach taken up by Wyler in \cite{W66}. However, in many group-like structures (such as rings, for instance), epimorphisms are not the same as surjective homomorphisms. One way to deal with this would be to artificially extend the language of a category to allow to speak about morphisms from a designated class $\mathcal{E}$ of epimorphisms and a designated class $\mathcal{M}$ of monomorphisms, and declare that the dual of a statement $f\in\mathcal{E}$ is the statement $f\in\mathcal{M}$ (this idea appears already in \cite{M50}). What we propose in this paper, if formalized in a category-theoretic language, gives a more conceptual solution to the problem, which is to extend the language of the category of groups with the language of the \emph{bifibration} of subgroups (a bifibration is a functor, which, together with its dual functor, is a fibration in the sense of Grothendieck \cite{G59}), with duality being the usual \emph{functorial duality} (i.e., duality of the elementary theory of a functor --- see page 32 in \cite{M98}).} \add{This bifibration is the projection functor $\mathfrak{p}\colon\mathbf{Grp}_{\mathbf{2}}\to\mathbf{Grp}$, where: $\mathbf{Grp}_{\mathbf{2}}$ is the category of ``pairs of groups'' in the sense of algebraic topology --- its objects are pairs $(G,S)$ where $G$ is a group and $S$ is its subgroup; $\mathbf{Grp}$ is the category of groups and the functor $\mathfrak{p}$ projects a pair $(G,S)$ to its first component $G$. Functorial duality refers to a notion of duality obtained by the process of switching to the dual functor $\mathfrak{p}^\mathsf{op}\colon\mathbf{Grp}_{\mathbf{2}}^\mathsf{op}\to\mathbf{Grp}^\mathsf{op}$ (similarly as ``categorical duality'' refers to switching to a dual category). In this setting, injective and surjective homomorphisms can be recovered, in a mutually dual way, as morphisms having (dual) universal properties with respect to the functor $\mathfrak{p}$. Furthermore, all of our axioms can be exhibited as self-dual axioms on the functor $\mathfrak{p}$. The developments in \cite{JW14, JW16, JW16b} should be sufficient to recover the details.} \add{Let us just mention here that semi-abelian categories can be recovered as domains of those bifibrations that in addition to the self-dual axioms also satisfy the following requirements: every subgroup is conormal, the codomain of the bifibration has finite products and sums, and $\pi_S$ exists for every $S$ and not only a normal one, as stipulated in Axiom~3. Of these, only the first one (conormality of all subgroups) would be a non-dual requirement, since it would imply that $\iota_S$ exists also for every $S$ (while the requirement of the existence of finite products and sums in the codomain of the bifibration is clearly a self-dual requirement).}

\add{
In the case when $\iota_S$ and $\pi_S$ are always defined, or equivalently, when every subgroup has an associated embedding and an associated projection, deserves special attention. Firstly, in this case we have:}
\begin{itemize}
	\item[\lbn] \add{Join of conormal subgroups is conormal. Dually, meet of normal subgroups is normal.} 
\end{itemize}
\add{Indeed, consider two conormal subgroups $A$ and $B$ of a group $G$. Then $\iota_A=\iota_{A\vee B}u$ for some (unique) homomorphism $u$, and therefore $A=\mathsf{Im}\iota_A\subseteq\mathsf{Im}\iota_{A\vee B}$. Similarly, $B\subseteq\mathsf{Im}\iota_{A\vee B}$ and so $A\vee B\subseteq \mathsf{Im}\iota_{A\vee B}$, which gives $A\vee B=\mathsf{Im}\iota_{A\vee B}$, concluding the proof. This would simplify formulation of the Butterfly Lemma slightly, as we would not have to ask the joins $S'\vee(S\wedge T)$ and $(S\wedge T)\vee T'$ to be conormal in the statement of the theorem. More interestingly, under the requirement that every subgroup has an associated embedding and an associated projection, the bifibration of subgroups becomes part of a series of five adjoint functors (as noted in \cite{W14}), and these in fact fit in a $2$-dimensional model of a truncated simplicial set (i.e.~a model of the dual of the $2$-category of ordinals --- see \cite{S80}):}

$$\add{\xymatrix@=50pt{ \mathbf{Grp_2}\ar[d]|-{\mathfrak{p}}\ar@/^30pt/[d]|-{\mathfrak{g}}\ar@/_30pt/[d]|-{\mathfrak{q}}\\ \mathbf{Grp}\ar@/^15pt/[u]|-{\mathfrak{s}}\ar@{}@<8pt>[u]|{\dashv}\ar@/_15pt/[u]|-{\mathfrak{t}}\ar@{}@<-8pt>[u]|{\dashv}\ar@{}@<-22pt>[u]|{\dashv}\ar@{}@<+22pt>[u]|{\dashv}\ar@/^15pt/[d]|-{\mathfrak{d}}\ar@{}@<-8pt>[d]|{\dashv}\ar@{}@<8pt>[d]|{\dashv}\ar@/_15pt/[d]|-{\mathfrak{c}} \\ \mathbf{Grp_0}\ar[u]|-{\mathfrak{i}} }}$$ 
\add{Here $\mathbf{Grp}_{\mathbf{0}}$ is the category of ``trivial groups'' --- i.e., groups $G$ for which the largest subgroup $G$ coincides with the smallest subgroup $1$ (in all standard examples this category is equivalent to a category with exactly one  morphism). The functor $\mathfrak{i}$ is a subcategory inclusion functor. The functors $\mathfrak{s}$ and $\mathfrak{t}$ assign to a group $G$ the pairs $(G,1)$ and $(G,G)$, respectively. The functors $\mathfrak{q}$ and $\mathfrak{g}$ assign to a pair $(G,S)$ the codomain of $\pi_S$ and the domain of $\iota_S$, respectively (these would now be defined for all subgroups $S$). The component of the unit of the adjunction $\mathfrak{q}\dashv \mathfrak{s}$ at a pair $(G,S)$ is mapped to $\pi_S$ by $\mathfrak{p}$, while the component of the counit of the adjunction $\mathfrak{t}\dashv \mathfrak{g}$ at a pair $(G,S)$ is mapped to $\iota_S$ by $\mathfrak{p}$.}

Our theory only covers the case when $\mathfrak{p}$ is a faithful \add{(and }amnestic\add{)} functor (called a ``form'' in \cite{JW14, JW16, JW16b}). In all of the concrete examples of interest, fibres of $\mathfrak{p}$ are complete lattices, and so in those cases, since our axioms require $\mathfrak{p}$ to be a bifibration, $\mathfrak{p}$ becomes in fact a \emph{topological functor} (see \cite{Bru84} and the references there). It would be interesting to see how much of our theory can be \add{extended} to non-faithful functors and which new examples could this lead to. \add{In particular, the simplicial display above suggests to consider, as a non-faithful example for exploration, the case when $\mathfrak{p}$ is the composition of a category $\mathbb{C}$, viewed as a functor from the category of pairs of composable arrows of $\mathbb{C}$ to the category of arrows of $\mathbb{C}$.}  

Thus, the present paper shows that the difficulties in expressing duality phenomenon for group-like structures can be overcome by using functorial duality in the place of categorical duality. This idea originated in the second author's work \cite{J14} on the comparison between semi-abelian categories with those appearing in the work of Grandis in homological algebra \cite{G92,G12,G13}. The origins of the latter work go back to earlier investigations, among others that of Puppe \cite{P62}, on generalizations of abelian categories which get rid of the assumption of the existence of products. The presence of duality was always a strong feature in these developments, and it was also inherited in the work of Grandis. In \cite{BG07} a common generalization is given of Puppe exact categories (which are the same as Mitchell exact categories \cite{M65}) and semi-abelian categories, which, however, retains the non-dual character of semi-abelian categories. In \cite{J14} it was shown that a self-dual unification is possible through the functorial approach. If in a functorial structure as described above we impose the axioms considered in this paper together with the requirement that every ``subgroup'' is both normal and conormal, we \add{will in fact} arrive to the notion of a generalized exact category in the sense of Grandis. In this case, the entire structure is determined by the class of ``null morphisms'', i.e.~morphisms $f\colon X\to Y$ satisfying $\mathsf{Ker}f=X$ (or equivalently, $\mathsf{Im}f=1$). In the case of pointed categories, i.e.~when the category has a zero object and null morphisms are those that factor through a zero object, generalized exact categories are precisely the Puppe exact categories (which under the presence of binary products are just the abelian categories).

The classes of Puppe exact categories and its generalizations considered by Grandis are all closed under ``projective quotients'', i.e.~the quotients of categories where two morphisms are identified when they induce the same direct and inverse image maps between the designated subobject lattices (such quotient of the category of vector spaces over a given field, with subspace lattices as designated subobject lattices, gives the category of projective spaces over the same field). Our axiomatic context has the same feature. This allows one to add projective quotients of semi-abelian categories to the list of examples where our theory can be applied. 

\add{The string of five functors in the simplicial display above is not far from the idea of a ``pointed combinatorial exactness structure'' in the sense of \cite{JM03} (see also \cite{JM09}), although there the codomain of $\mathfrak{p}$ is not equipped with a category structure, and consecutively, the adjoint structure of this string is discarded. Similar five functors (with the adjoint structure) also appear in \cite{CJWW15}.}   

The method of chasing subgroups goes back to Mac Lane \cite{M63}, who developed it in the context of abelian categories and used it for proving diagram lemmas of homological algebra. These lemmas can also be established in our self-dual context. The techniques for proving them are essentially identical to those used in \cite{G13}, except for the Snake Lemma, which will require our Homomorphism Induction Theorem for constructing the connecting homomorphism (the proof of the Snake Lemma using this technique was first presented in the talk of the second author given at Category Theory 2015 in Aveiro, Portugal). Revisiting further aspects of non-abelian homological algebra, as developed by Grandis in \cite{G12,G13}, would be interesting. It should also be mentioned that the importance of the direct and inverse image maps between lattices of subobjects is prominent in Wyler's work \cite{W71}, while the power of this technique in the study of non-additive abelian categories (where lattices of subobjects are replaced with lattices of normal subobjects) was realized in the work of Grandis and goes back to \cite{G84}. In particular, the formula $$ff^{-1}A=A\vee\mathsf{Ker}f$$
from our Axiom 2, plays a crucial role both in the work of Wyler and Grandis, as it does in ours. As explained in \cite{J14}, this formula is also a key ingredient of the notion of a semi-abelian category, which encodes protomodularity introduced by Bourn in \cite{B91}. In universal algebra, this formula is equivalent to the statement that a subalgebra which contains the $0$-class of a congruence is a union of congruence classes. As first shown in \cite{B79}, this is a reformulation of Ursini's condition on a variety of universal algebras from \cite{U73}, which in the case of pointed varieties determine precisely the semi-abelian ones. Note that the dual of the above formula, 
$$f^{-1}fB=B\wedge\mathsf{Im}f,$$
holds in any variety of universal algebras, and more generally, in any regular category (and hence also in any topos).

Going a bit back in time, we must mention that since the discovery of the notion of a lattice and the modular law by Dedekind \cite{D97}, lattice-theoretic techniques have brought a great influence on the development of abstract algebra. An element-free unified approach in the study of group-like structures, based on substructure lattices, has been very explicitly proposed already by Ore \cite{O35,O36}, ten years before the subject of category theory was born. However, Ore restricts to considering only those substructures which form modular lattices, and hence his work is rather a forerunner for the work of Grandis, where in the strongest setting (that of a generalized exact category), subobjects being considered always form a modular lattice. Also, Ore's approach is more primitive in that he does not make use of the Galois connections (formally introduced by Ore only nine years later in \cite{O44}), which provide a fundamental tool in the work of Grandis, similarly as in our more general theory. The work of Ore did not seem to give rise to any immediate further development towards axiomatic unification of the study of group-like structures. Instead, relationships between the structure of a group and the structure of its subgroup lattice have been extensively explored --- see \cite{S94} and the reference there. It should be mentioned also that traces of the idea that a unified treatment of isomorphism theorems for group-like structures, which does not make use of the group operations should exist, can be found already in Noether's work \cite{N27}.  

It is expected that many results on group homomorphisms and subgroups can be captured by our theory. For instance, the Butterfly Lemma opens a way to the Jordan-H\"older Theorem, as it does in group theory, following Schreier \cite{S28} and Zassenhaus \cite{Z34}. Dual of the Jordan-H\"older theorem would then be a similar theorem for chief series in the place of composition series.

Duals of the isomorphism theorems established in the present paper correspond, in most cases, to the same isomorphism theorems in group theory, but with stronger assumptions. For instance, the dual of the Diamond Isomorphism Theorem in group theory will give: we have an isomorphism
$$B/(A\wedge B)\approx (A\vee B)/A$$
when $A$ and $A\wedge B$ are normal subgroups of a group $G$. Thus, these more restricted versions of isomorphism theorems are in fact their equivalent reformulations from the point of view of our theory! Duality would be a very practical tool in establishing diagram lemmas in our theory, as it would allow to prove the entire lemma by only proving its dual half, just as in the work of Grandis, and more classically, as in the context of an abelian category.    

\add{S}imilarly to what happens in group theory, we can establish two \add{restricted versions of the modular law for subgroups (recall that for ordinary subgroups, the modular law holds only when certain subgroups are assumed to be normal)}, and \add{in our theory} these two modular laws happen to be dual to each other:   

\begin{lemma}[Restricted Modular Law]\label{lemB}
	For any three subgroups $X,$ $Y,$ and $Z$ of a group $G,$ if either $Y$ is normal and $Z$ is conormal, or $Y$ is conormal and $X$ is normal, then we have:
	$$X\subseteq Z \quad \implies \quad X\vee(Y\wedge Z)= (X\vee Y)\wedge Z.$$
	\label{l1}
\end{lemma}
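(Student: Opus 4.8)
The plan is to treat the two hypotheses separately and to derive the second from the first by the self-duality of the theory, so that the real work lies in the case where $Y$ is normal and $Z$ is conormal. In that case I would transport the whole computation into the subgroup $Z$ via the embedding $\iota_Z\colon Z/1\to G$ supplied by Axiom~3 (so that $\mathsf{Im}\iota_Z=Z$), and repeatedly use the two formulas of Axiom~2, namely $\iota_Z\iota_Z^{-1}B=B\wedge Z$ and $g^{-1}gA=A\vee\mathsf{Ker}g$. Note that the inclusion $X\vee(Y\wedge Z)\subseteq(X\vee Y)\wedge Z$ holds in any lattice once $X\subseteq Z$, so in principle only the reverse inclusion is at stake; the argument below, however, yields the equality directly.

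The key step I would isolate is the claim that the inverse image map of $\iota_Z$ preserves the join $X\vee Y$, that is,
$$\iota_Z^{-1}(X\vee Y)=\iota_Z^{-1}X\vee\iota_Z^{-1}Y.$$
Granting this, the theorem follows by a short computation. Using $\iota_Z\iota_Z^{-1}B=B\wedge Z$ (Axiom~2), the fact that direct images preserve joins, and $X\wedge Z=X$ (since $X\subseteq Z$), one obtains
$$(X\vee Y)\wedge Z=\iota_Z\iota_Z^{-1}(X\vee Y)=\iota_Z\bigl(\iota_Z^{-1}X\vee\iota_Z^{-1}Y\bigr)=(X\wedge Z)\vee(Y\wedge Z)=X\vee(Y\wedge Z).$$

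To prove the key step --- and this is where normality of $Y$ is essential, and is the main obstacle, since inverse images do \emph{not} preserve joins in general --- I would bring in the projection $\pi_Y\colon G\to G/Y$ from Axiom~3 (so that $\mathsf{Ker}\pi_Y=Y$) and set $g=\pi_Y\iota_Z$. Then $\mathsf{Ker}g=\iota_Z^{-1}\mathsf{Ker}\pi_Y=\iota_Z^{-1}Y$, and since $X\subseteq Z$ we have $\pi_YX=\pi_Y(X\wedge Z)=\pi_Y\iota_Z\iota_Z^{-1}X=g\,\iota_Z^{-1}X$. Using $X\vee Y=\pi_Y^{-1}\pi_YX$ (Axiom~2, as $Y=\mathsf{Ker}\pi_Y$) together with the composition law $\iota_Z^{-1}\pi_Y^{-1}=(\pi_Y\iota_Z)^{-1}=g^{-1}$ (Axiom~1.4), I would then compute
$$\iota_Z^{-1}(X\vee Y)=\iota_Z^{-1}\pi_Y^{-1}\pi_YX=g^{-1}g\,\iota_Z^{-1}X=\iota_Z^{-1}X\vee\mathsf{Ker}g=\iota_Z^{-1}X\vee\iota_Z^{-1}Y,$$
where the crucial third equality is precisely the second formula of Axiom~2. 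This settles the key step, and hence the first case.

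Finally, for the second case ($Y$ conormal and $X$ normal) I would invoke duality. The dual of the statement just proved interchanges $\vee$ with $\wedge$, reverses inclusions, and swaps ``normal'' with ``conormal'', so it reads, for universally quantified subgroups: \emph{if $Y$ is conormal, $Z$ is normal and $Z\subseteq X$, then $X\wedge(Y\vee Z)=(X\wedge Y)\vee Z$}. Relabelling the bound variables by $X\leftrightarrow Z$ turns this verbatim into the second case, with hypotheses ``$Y$ conormal and $X$ normal'' and conclusion $(X\vee Y)\wedge Z=X\vee(Y\wedge Z)$. Since the whole theory is invariant under duality, the second case is proved the moment the first one is, so no separate computation is needed.
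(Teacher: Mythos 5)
Your proposal is correct and follows essentially the same route as the paper: the second case is reduced to the first by duality, and the first case is settled by an equational computation driven by the two formulas of Axiom~2 applied along a homomorphism with kernel $Y$, a homomorphism with image $Z$, and their composite (your $g=\pi_Y\iota_Z$ plays exactly the role of the paper's composite $gf$). The only difference is one of packaging: the paper takes \emph{arbitrary} witnesses $g$ and $f$ with $Y=\mathsf{Ker}g$ and $Z=\mathsf{Im}f$, so it never invokes Axiom~3, whereas you use the canonical $\pi_Y$ and $\iota_Z$ of Axiom~3 and isolate the identity $\iota_Z^{-1}(X\vee Y)=\iota_Z^{-1}X\vee\iota_Z^{-1}Y$ as a key step --- a marginally less economical, but equally valid, arrangement of the same computation.
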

\begin{proof}
	The second case, when $Y$ is conormal and $X$ is normal, is dual to the first case, when $Y$ is normal and $Z$ is conormal, so it suffices to prove the implication in the first case. 
	\add{A}ssume $Y= \mathsf{Ker}g$ and $Z =\mathsf{Im}f$ for some homomorphisms $g$ and $f$ respectively. We must prove
	\[X\subseteq \mathsf{Im}f \qquad \implies \qquad  X\vee(\mathsf{Ker}g\wedge \mathsf{Im}f)=(X\vee \mathsf{Ker}g)\wedge \mathsf{Im}f.\]
	Indeed, if $X \subseteq \text{Im}\,f$ then
	\begin{align*}
	X\vee (\mathsf{Ker}g\wedge \mathsf{Im}f) &= (X\wedge \mathsf{Im}f)\vee (\mathsf{Ker}g\wedge \mathsf{Im}f)\\
	&=ff^{-1}X\vee ff^{-1}\mathsf{Ker}g\\
	&= f(f^{-1}X\vee f^{-1}\mathsf{Ker}g)\\
	&= f(f^{-1}X\vee \mathsf{Ker}gf)\\
	&= f(gf)^{-1}gf(f^{-1}X)\\
	&= f(gf)^{-1}g(ff^{-1}X)\\
	&= f(gf)^{-1}g(X\wedge \mathsf{Im}f)\\
	&= f(gf)^{-1} gX\\
	&= ff^{-1}g^{-1}gX\\
	&= ff^{-1}(X \vee \mathsf{Ker}g)\\
	&= (X \vee \mathsf{Ker}g)\wedge \mathsf{Im}f.
	\end{align*}
\end{proof}

Had we followed the standard proof of the Butterfly Lemma, we would have needed to use the above Restricted Modular Law along with the Diamond Isomorphism Theorem. 

Our ``embedding diamonds'' are in fact pullbacks and our ``projection diamonds'' are pushouts in the sense of category theory. Subquotients in group theory (and more generally, in any semi-abelian category, as well as in the work of Grandis) can always be reduced to subquotients of length two (a projection followed by an embedding), thanks to the fact that there a pullback of a projection along an embedding is a projection. The dual of this is false (a pushout of an embedding along a projection is not necessarily an embedding), and so we cannot require its validity in our theory. Because of this, we are not able to define and compose relations in the ``usual way''. However, the pyramid should still allow defining relations in our theory as suitable equivalence classes of zigzags, in a way which in the case of each particular type of group-like structures, recaptures precisely the homomorphic relations. This would not be too surprising --- a very particular instance of our pyramid already appears implicitly in the way Grandis defines composition of relations in his contexts, which in fact goes back to the work of Tsalenko \cite{T67}. A thorough investigation of this topic as well as the closely related topic of coherence theorems for connecting homomorphisms should be interesting to carry out.

\bigskip 

Several talks have been given by the second author on different aspects of the subject of the present paper, which includes his invited talk at the international category theory conference, Category Theory 2013, held at Macquarie University in Sydney, where the validity of the isomorphism theorems in the axiomatic context presented in this paper was first announced. The direct proof of the Butterfly Lemma using the Universal Isomorphism Theorem was obtained during the visit of the first author at Stellenbosch University in October 2016.  

\section*{Acknowledgement}

\add{The authors would like to express deep thanks to the anonymous referee for his/her useful remarks on the earlier version of the paper.}


\end{document}